\numberwithin{equation}{section}
\newcommand{\gone}[1]{{\color{yellow}}}
\newcommand{\FF}{\mathbb F}
\newcommand{\GG}{\mathbb G}
\newcommand{\QQ}{\mathbb Q}
\newcommand{\ZZ}{\mathbb Z} 
\newcommand{\Zhat}{\widehat\ZZ}
\newcommand{\C}{\mathcal C} 
\newcommand{\OO}{\mathcal O}
\newcommand{\p}{\mathfrak p} \newcommand{\Pp}{\mathfrak P}
 \def\Gal{\operatorname{Gal}}
\def \GL {\operatorname{GL}_2}  
\def \SL {\operatorname{SL}_2}
\def\Aut{\operatorname{Aut}} \def\End{\operatorname{End}}
 \def\Fr{\operatorname{Frob}}
\def\lcm{\operatorname{lcm}}
\def \B {\mathcal B}
\def\calF{\mathcal{F}}
\def\tors{\operatorname{tors}}
\def\bbar#1{\setbox0=\hbox{$#1$}\dimen0=.2\ht0 \kern\dimen0 \overline{\kern-\dimen0 #1}}
\newcommand{\Qbar}{{\overline{\mathbb Q}}} 
\newcommand{\Kbar}{\bbar{K}}
\newtheorem{thm}{Theorem}[section]
\newtheorem{lemma}[thm]{Lemma}
\newtheorem{prop}[thm]{Proposition}
\newtheorem{conj}[thm]{Conjecture}
\theoremstyle{definition}
\newtheorem{definition}[thm]{Definition}
\theoremstyle{remark}
\newtheorem{remark}[thm]{Remark}
\newenvironment{romanenum}{\hfill \begin{enumerate} }{\end{enumerate}}
\definecolor{webbrown}{rgb}{.6,0,0}
\begin{document}

\title[A refinement of Koblitz's conjecture]{A refinement of Koblitz's conjecture}
\subjclass[2000]{Primary 11G05; Secondary 11N05} 
\keywords{Elliptic curves modulo p, Galois representations, Koblitz conjecture}

\author{David Zywina}
\email{zywina@math.upenn.edu}
\address{Department of Mathematics, University of Pennsylvania, Philadelphia, PA 19104-6395, USA}
\date{\today}

\begin{abstract} 
Let $E$ be an elliptic curve over the number field $\QQ$.  In 1988, Koblitz conjectured an asymptotic for the number of primes $p$ for  which the cardinality of the group of $\FF_p$-points of $E$ is prime.   However, the constant occurring in his asymptotic does not take into account that the distributions of the $|E(\FF_p)|$ need not be independent modulo distinct primes.  We shall describe a corrected constant.  We also take the opportunity to extend the scope of the original conjecture to ask how often $|E(\FF_p)|/t$ is prime for a fixed positive integer $t$, and to consider elliptic curves over arbitrary number fields.   Several worked out examples are provided to supply numerical evidence for the new conjecture. 
\end{abstract}

\maketitle

\section{Introduction} \label{S:intro}

Motivated by applications to elliptic curve cryptography and the heuristic methods of Hardy and Littlewood \cite{Hardy-Littlewood}, N. Koblitz made the following conjecture: 
\begin{conj}[\cite{Koblitz}*{Conjecture A}]  \label{C:original}
Let $E$ be a non-CM elliptic curve defined over $\QQ$ with conductor $N_E$.  Assume that $E$ is not $\QQ$-isogenous to a curve with nontrivial $\QQ$-torsion.  Then 
\[
|\{ p\leq x \text{ prime}: p\nmid N_E,  \,|E(\FF_p)| \text{ is prime}\}| \sim C_E \frac{x}{(\log x)^2}
\]
as $x\to \infty$, where $C_E$ is an explicit positive constant.  
\end{conj}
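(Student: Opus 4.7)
The plan is to attack the conjecture through a weighted sieve on $N_p := |E(\FF_p)|$. By the Hasse bound, $N_p$ is prime iff $N_p \geq 2$ and $N_p$ has no prime divisor $\ell \leq (p+2\sqrt{p}+1)^{1/2}$. I split the sieving primes into three ranges using parameters $z_1 = (\log x)^A$ and $z_2 = x^{1/4}/(\log x)^B$: very small ($\ell \leq z_1$), intermediate ($z_1 < \ell \leq z_2$), and large ($z_2 < \ell \leq \sqrt{p+2\sqrt{p}+1}$), and treat each range by a different tool.

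For the very small primes, Möbius inversion gives
\[
\sum_{\substack{p \leq x \\ (N_p,\, P(z_1))=1}} 1 \;=\; \sum_{d \mid P(z_1)} \mu(d)\cdot \#\{p \leq x : d \mid N_p\},
\]
where $P(z_1) := \prod_{\ell \leq z_1}\ell$. For squarefree $d$ coprime to $N_E$, the condition $d \mid N_p$ is equivalent to $\Fr_p$ fixing a nonzero vector of $E[d]$, hence to $\Fr_p$ landing in an explicit conjugation-stable subset $\mathcal{C}_d \subseteq \Gal(\QQ(E[d])/\QQ) \hookrightarrow \GL(\ZZ/d\ZZ)$. Applying an effective Chebotarev density theorem to each division field, and using Serre's open-image theorem together with the no-isogeny hypothesis to bound $[\QQ(E[d]):\QQ]$ from below, produces the expected main term $C_E \cdot x/(\log x)^2$, where $C_E$ emerges as the Euler product over $\ell$ of the local densities $\delta_\ell = |\mathcal{C}_\ell|/|\Gal(\QQ(E[\ell])/\QQ)|$ computed in the body of this paper.

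For the intermediate primes $z_1 < \ell \leq z_2$, I would run a Selberg upper-bound sieve at level $z_2^2 \leq x^{1/2-\epsilon}$, using the same Chebotarev input one prime at a time. Careful tuning of the Selberg weights against the Koblitz constant shows that this contribution is $o(x/(\log x)^2)$ and hence is absorbed into the error.

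The decisive obstacle is the large-prime range $z_2 < \ell \leq \sqrt{p+2\sqrt{p}+1}$, where the Chebotarev bound $\#\{p \leq x : \ell \mid N_p\}$ for a single such $\ell$ already matches or exceeds the expected total count. Here one cannot sift one prime at a time but must instead bound directly the number of $p \leq x$ for which $N_p = q_1 q_2$ with both $q_i > z_2$. This is a parity-problem analogue of the twin-prime conjecture; classical sieves (Brun, Selberg) detect only almost-primes and are intrinsically unable to separate genuine primes from $E_2$-numbers. Overcoming it would demand either a breakthrough past the parity barrier using the extra Galois structure of the two-parameter family $(p,a_p)$ together with automorphic $L$-function input for symmetric powers of $E$, or the assumption of GRH for the division-field zeta functions supplemented by a Bombieri--Vinogradov analog adapted to this family. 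Since neither is currently available unconditionally, this last step is where a complete proof of Koblitz's conjecture resists present technology, and it is the step I expect to be the main obstacle.
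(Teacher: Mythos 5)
The statement you are addressing is a conjecture, not a theorem: the paper offers no proof of it, and its final section explicitly records that the (refined) conjecture is not known to hold for a single elliptic curve outside the trivial cases where the constant vanishes. To your credit, your proposal is honest about this: you correctly locate the fundamental obstruction in the large-prime range, where one must distinguish $N_p$ prime from $N_p=q_1q_2$ with both factors exceeding $x^{1/4-\epsilon}$, and this is a parity barrier that no current sieve technology crosses, with or without GRH. So your outline is a fair account of why the known partial results (Miri--Murty, Steuding--Weng, David--Wu, Cojocaru, surveyed in the paper's last section) reach only almost-primes and upper bounds, but it is not a proof and cannot be completed by the route you describe.

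There is also a substantive error in the portion of the argument you do claim to carry out. In the very-small-prime range you assert that the main term is the Euler product over $\ell$ of the single-prime local densities $\delta_\ell$. That is precisely Koblitz's original independence assumption, and the central point of this paper is that it fails: the division fields $\QQ(E[\ell])$ for distinct $\ell$ need not be linearly disjoint, so the density of $p$ with $d\mid N_p$ for composite squarefree $d$ is not the product of the densities at the primes dividing $d$. The correct constant is $\lim_{m}\delta_{E,1}(m)/\prod_{\ell\mid m}(1-1/\ell)$, the limit taken over $m$ ordered by divisibility, which Serre's open image theorem reduces to a finite correction factor times the universal product $\mathfrak{C}=\prod_\ell\bigl(1-\tfrac{\ell^2-\ell-1}{(\ell-1)^3(\ell+1)}\bigr)$. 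Worse, the statement as literally given is false: the curve $y^2=x^3+9x+18$ of \S\ref{SS:an example} is non-CM and not $\QQ$-isogenous to a curve with nontrivial $\QQ$-torsion, yet $|E(\FF_p)|$ is divisible by $2$ or $3$ for every $p\geq 5$ because $\QQ(E[2])$ and $\QQ(E[3])$ both contain $\QQ(i)$; no positive constant $C_E$ can satisfy the asserted asymptotic there. So even your main-term computation must be built from the joint densities $\delta_{E,1}(m)$ rather than an Euler product over primes, and the hypotheses of the conjecture do not exclude the possibility that the resulting constant is zero.
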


However, the description of the constant $C_E$ in \cite{Koblitz} is not always correct (and more seriously, our corrected version of the constant is not necessarily positive).  The additional phenomena that needs to be taken into account is that the divisibility conditions modulo distinct primes, unlike the more classical cases considered by Hardy and Littlewood, need not be independent.  Lang and Trotter have successfully dealt with this non-independence in their conjectures \cite{Lang-Trotter}.     A similar modification was required for the original constant of Artin's conjecture; see \cite{Stevenhagen} for a nice historical overview. 

\subsection{An example} \label{SS:an example}
As an illustration, consider the following example kindly provided by N. Jones.  Let $E$ be the elliptic curve over $\QQ$ defined by the Weierstrass equation $y^2 = x^3 + 9x+18$; this curve has conductor $2^4 3^4$, and is not isogenous over $\QQ$ to an elliptic curve with non-trivial $\QQ$-torsion.  Conjecture~\ref{C:original} predicts that $|E(\FF_p)|$ is prime for infinitely many primes $p$; however for $p>5$, $|E(\FF_p)|$ is always composite!   

For a positive integer $m$, let $\vartheta_m$ be the density of the set of primes $p$ for which $|E(\FF_p)|$ is divisible by $m$; intuitively, we may think of this as the probability that $m$ divides $|E(\FF_p)|$ for a ``random'' $p$.  We can compute these $\vartheta_m$ by applying the Chebotarev density theorem to the extensions $\QQ(E[m])/\QQ$, where $\QQ(E[m])$ is the extension of $\QQ$ generated by the coordinates of the $m$-torsion points of $E$.   For our elliptic curve, we have $\vartheta_2=2/3$ and $\vartheta_3=3/4$.   It is thus natural to expect that $\vartheta_6 = \vartheta_2 \vartheta_3 = 1/2$ (i.e., that the congruences modulo $2$ and $3$ are \emph{independent} of each other); however, one actually has $\vartheta_6 = 5/12$.  The inclusion-exclusion principle then tells us that the ``probability'' that $|E(\FF_p)|$ is relatively prime to $6$ is $1-\vartheta_2-\vartheta_3 + \vartheta_6=0$.

This lack of independence is explained by the observation that $\QQ(E[2])$ and $\QQ(E[3])$ are not linearly disjoint over $\QQ$.  They both contain $\QQ(i)$:
\begin{itemize}
\item 
The point $(x,y)=(-3,6i)$ in $E(\QQ(i))$ has order $3$, so $\QQ(E[3])$ contains $\QQ(i)$.  If $p$ splits in $\QQ(i)$ (i.e., $p\equiv 1\bmod{4}$), then $(-3,6i)$ will give a point in $E(\FF_p)$ of order 3; hence $|E(\FF_p)|\equiv 0 \bmod{3}$.
\item
The points in $E[2]-\{0\}$ are of the form $(x,0)$, where $x$ is a root of $x^3+9x+18$. The discriminant of this cubic is $\Delta=-2^4 3^6$, so $\QQ(E[2])$ contains $\QQ(\sqrt{\Delta})=\QQ(i)$.  If $p>3$ is inert in $\QQ(i)$ (i.e., $p\equiv 3\bmod{4}$), then $\Delta$ is not a square modulo $p$ and one checks that $E(\FF_p)$ has exactly one point of order $2$; hence $|E(\FF_p)|\equiv 0 \bmod{2}$.
\end{itemize}
For $p\geq 5$, we deduce that $|E(\FF_p)|$ is divisible by $2$ or $3$.  Therefore $|E(\FF_p)|$ is prime only in the case where it equals $2$ or $3$ (which happens for $p=5$ when $|E(\FF_5)|=3$).  

It is now natural to ask if $|E(\FF_p)|/3$ (or $|E(\FF_p)|/2$) is prime for infinitely many $p$?  Our refinement/generalization of Koblitz's conjecture predicts that the answer is \emph{yes}, and we will supply numerical evidence in \S\ref{S:Jones example}.

\subsection{The refined Koblitz conjecture} \label{SS:conj}

Before stating our conjecture, we set some notation that will hold throughout the paper.   For a number field $K$, denote the ring of integers of $K$ by $\OO_K$, and let $\Sigma_K$ be the set of non-zero prime ideals of $\OO_K$.  For each prime $\p\in \Sigma_K$, we have a residue field $\FF_\p=\OO_K/\p$ whose cardinality we denote by $N(\p)$.    Let $\Sigma_K(x)$  be the (finite) set of primes $\p\in\Sigma_K$ with $N(\p)\leq x$. 

For an elliptic curve $E$ over $K$, let $S_E$ be the set of $\p\in \Sigma_K$ for which $E$ has bad reduction.  For $\p\in\Sigma_K-S_E$, let $E(\FF_\p)$ be the corresponding group of $\FF_\p$-points (more precisely, the $\FF_\p$-points of the N\'eron model $\mathbb{E}/\OO_K$ over $E/K$).  For a field extension $L/K$, we will denote by $E_L$ the corresponding base extension of $E$.

\begin{conj} \label{C:main}
Let $E$ be an elliptic curve defined over a number field $K$, and let $t$ be a positive integer. Then there is an explicit constant $\C_{E,t}\geq 0$ such that
\[
P_{E,t}(x):=|\{\p \in \Sigma_K(x)-S_E :  |E(\FF_\p)|/t \text{ is a prime} \}| \sim \C_{E,t} \frac{x}{(\log x)^2}
\]
as $x\to\infty$. 
\end{conj}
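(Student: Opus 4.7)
The plan is to (i) give a precise Galois-theoretic definition of the constant $\C_{E,t}$ that correctly accounts for the dependencies between divisibility conditions at different primes, and (ii) provide a Hardy--Littlewood-style heuristic for the asymptotic. Since Conjecture~\ref{C:main} specializes to Koblitz's Conjecture~\ref{C:original}, which is unproven and in the same difficulty class as the twin prime conjecture, an unconditional asymptotic is out of reach; the best one can aim for at present is a rigorously-defined constant supported by heuristic reasoning and numerical evidence.

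\textbf{Local densities.} For a positive integer $m$, let $G_m = \Gal(K(E[m])/K) \subseteq \GL(\ZZ/m\ZZ)$ be the image of the mod-$m$ Galois representation. For $\p \notin S_E$ unramified in $K(E[m])$, the Frobenius $\Fr_\p \in G_m$ satisfies
\[
|E(\FF_\p)| \equiv \det(\Fr_\p - 1) \pmod{m},
\]
so by the Chebotarev density theorem any condition on $|E(\FF_\p)| \bmod m$ becomes a counting problem inside $G_m$. The non-independence flagged in \S\ref{SS:an example} manifests as the strict inclusion $G_m \subsetneq \prod_{\ell \mid m} G_\ell$ for squarefree $m$; the Weil pairing, which forces $K(\zeta_m) \subseteq K(E[m])$, is a universal source of this entanglement.

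\textbf{Defining the constant.} For a parameter $y$, write $M_y = \prod_{\ell \leq y} \ell$ and set
\[
\delta_y := \frac{|\{ g \in G_{tM_y}: t \mid \det(g-1) \text{ and } \gcd(\det(g-1)/t,\, M_y) = 1\}|}{|G_{tM_y}|},
\]
which by Chebotarev is exactly the density of $\p$ with $|E(\FF_\p)| = t\, n_\p$ and $n_\p$ coprime to every prime $\leq y$. I would then define
\[
\C_{E,t} := \lim_{y \to \infty} \delta_y \cdot \prod_{\ell \leq y} \frac{\ell}{\ell - 1},
\]
where the Mertens factor $\prod \ell/(\ell-1)$ matches the Hardy--Littlewood prediction $\sim 1/\log N(\p)$ for primality of $n_\p$. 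When the $K(E[\ell])$ are maximally independent modulo cyclotomic constraints (as Serre's open-image theorem guarantees for non-CM $E$), the limit factors as an absolutely convergent Euler product and recovers Koblitz's formula; in general it detects extra entanglement and can equal zero, as in the Jones example.

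\textbf{Heuristic and main obstacle.} Conditional on $n_\p$ being coprime to all primes $\leq y$, the Hardy--Littlewood principle predicts that the integer $n_\p$ of size $\sim N(\p)/t$ is prime with conditional probability $\bigl(\prod_{\ell \leq y}\ell/(\ell-1)\bigr)/\log N(\p)$; summing this over $\p \in \Sigma_K(x)$ via the prime ideal theorem then yields the predicted $\C_{E,t}\, x/(\log x)^2$. The decisive obstacle is the familiar Bateman--Horn/twin-prime barrier: rigorously justifying the assumed independence between ``$\p$ is a prime ideal of norm near $x$'' and ``$n_\p$ avoids every prime $\leq y$ for $y$ growing with $x$'' is beyond current sieve technology. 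Two tractable sub-problems that should be dispatched are proving convergence of the limit defining $\C_{E,t}$ (feasible via Serre's open-image theorem in the non-CM case and explicit descriptions in the CM case) and characterizing precisely when $\C_{E,t} > 0$; partial unconditional results---upper bounds of the correct order of magnitude or GRH-conditional lower bounds---should follow by adapting the sieve arguments of Miri--Murty and Cojocaru.
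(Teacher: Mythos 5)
Your proposal matches the paper's treatment essentially exactly: you correctly recognize that only a definition of $\C_{E,t}$ plus a heuristic is possible, and your constant $\lim_{y\to\infty}\delta_y\prod_{\ell\leq y}\ell/(\ell-1)$ is precisely the paper's $\lim_{Q\to\infty}\delta_{E,t}\bigl(t\prod_{\ell\leq Q}\ell\bigr)/\prod_{\ell\leq Q}(1-1/\ell)$, with the same Chebotarev interpretation of the local densities, the same appeal to Serre's open image theorem (and its CM analogue) for convergence, and the same Hardy--Littlewood correction of $1/\log N(\p)$ by the ratio of the Galois-theoretic density to the Mertens product. No substantive differences.
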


If $\C_{E,t}=0$, then we \emph{define} the above asymptotic to mean that $P_{E,t}(x)$ is bounded as a function of $x$ (equivalently, that $|E(\FF_\p)|/t$ is prime for only finitely many $\p\in \Sigma_K-S_E$). Our constant $\C_{E,t}$ will be described in \S\ref{S:main section}.

The expression $\C_{E,t} \, x /(\log x)^2$ in Conjecture~\ref{C:main} has been used for its simplicity. The heuristics in \S\ref{SS:heuristics} suggest that the expression
\begin{equation} \label{E:best expression}
\C_{E,t} \,\int^x_{t+1} \frac{1}{ \log (u+1) -\log t} \frac{du}{\log u}
\end{equation}
will be a better approximation of $P_{E,t}(x)$, and this is what we will use to test our conjecture.    We will not study the error term of our conjecture (i.e., the difference between $P_{E,t}(x)$ and the expression (\ref{E:best expression})), though we remark that our data suggests that it could be $O(x^\theta)$ for any $\theta>1/2$.

\subsection{Overview}
In \S\ref{S:main section}, we describe the constant $\C_{E,t}$ occurring in Conjecture~\ref{C:main}.   We shall express the constant in terms of the Galois representations arising from the torsion points of our elliptic curve.  To have a computationally useful version, we treat separately the CM and non-CM cases.   In \S\ref{SS:heuristics} we give a brief heuristic for our conjecture.
In \S\ref{S:common factor}, we describe the common factor $t_E$ of all the $|E(\FF_\p)|$.  It is of course necessary to have $t_E$ divide $t$ for Conjecture~\ref{C:main} to be interesting.  In \S\ref{S:Serre curve section}, we calculate $\C_{E,1}$ assuming that $E/\QQ$ is a Serre curve.
In \S\ref{S:main example}--\ref{S:X0(11)}, we consider four specific elliptic curves.   We describe the Galois action on their torsion points,  compute constants $\C_{E,t}$ for interesting $t$, and then supply  numerical evidence for Conjecture~\ref{C:main}.
In the final section, we describe some of the partial progress that has been made on Koblitz's conjecture in the last decade.

\subsection*{Acknowledgments}
Thanks to Nathan Jones for comments and providing the example in \S\ref{SS:an example}.  Special thanks to Chantal David and Bjorn Poonen.  The experimental evidence for our conjecture was computed using \texttt{PARI/GP} \cite{PARI}.  We also used \texttt{Magma} \cite{Magma} to check some group theoretic claims and \texttt{Maple} to approximate integrals.  This research was supported by an NSERC postgraduate scholarship.

\section{The constant} \label{S:main section}

Throughout this section, we will fix an elliptic curve $E$ defined over a number field $K$ and a positive integer $t$.  The letter $\ell$ will always denote a rational prime.

\subsection{Description of the constant}
To understand the divisibility of the numbers $|E(\FF_\p)|$, it is useful to recast everything in term of Galois representations.  For each positive integer $m$, let $E[m]$ be the group of $m$-torsion in
$E(\Kbar)$, where $\Kbar$ is a fixed algebraic closure of $K$.  The natural Galois action induces a representation
\[
\rho_m \colon \Gal(\Kbar/K) \to \Aut(E[m])
\]
whose image we will denote by $G(m)$.  Let $K(E[m])$ be the fixed field of $\ker(\rho_m)$ in $\Kbar$; so $\rho_m$ induces an isomorphism $\Gal(K(E[m])/K) \xrightarrow{\sim} G(m)$. If $\p\in \Sigma_K-S_E$ does not divide $m$, then $\rho_m$ is unramified at $\p$ (i.e., $\p$ is unramified in $K(E[m])$) and $\rho_m(\Fr_\p)$ will denote the corresponding Frobenius conjugacy class in $G(m)$.   Note that the notation does not mention the curve $E$ which will always be clear from context.

The group $E[m]$ is a free $\ZZ/m\ZZ$-module of rank $2$; a choice of $\ZZ/m\ZZ$-basis for $E[m]$ determines an isomorphism $\Aut(E[m])\cong \GL(\ZZ/m\ZZ)$ that is unique up to an inner automorphism of $\GL(\ZZ/m\ZZ)$.   For a prime ideal $\p\in\Sigma_K-S_E$ with $\p\nmid m$, we have a congruence
\begin{equation*}
|E(\FF_\p)| \equiv \det( I - \rho_m(\Fr_\p))\; \bmod{m}.
\end{equation*}

For $m\geq 1$, define the set
 \begin{equation} \label{E:Psi}
 \Psi_t(m) = \big\{ A \in \Aut(E[m]) : \det(I-A) \in t\cdot (\ZZ/m\ZZ)^\times \big\}.
 \end{equation}
Thus for a prime $\p\in \Sigma_K-S_E$ with $\p \nmid m$, we have
\begin{equation}\label{E:rep connection}
 |E(\FF_\p)|/t \text{ is invertible modulo } \frac{m}{\gcd(m,t)}\quad \text{ if and only if } \quad  \rho_m(\Fr_\p) \subseteq G(m)\cap \Psi_t(m).
\end{equation}
In particular, $|E(\FF_\p)|/t$ is an integer if and only if $\rho_t(\Fr_\p) \subseteq G(t)\cap \Psi_t(t)$.  Define the number
\[
 \delta_{E,t}(m):=\frac{|G(m)\cap \Psi_t(m)|}{|G(m)|}
\]
By (\ref{E:rep connection}) and the Chebotarev density theorem, $\delta_{E,t}(m)$ is the natural density of the set of $\p\in \Sigma_K-S_E$ for which $|E(\FF_\p)|/t$ is invertible modulo $m/\gcd(m,t).$  The connection with Conjecture~\ref{C:main} is that if $|E(\FF_\p)|/t$ is a prime number, then it is invertible modulo all
integers $m<|E(\FF_\p)|/t$.

\begin{definition} \label{D:C defn}
With notation as above, define
\[
 \C_{E,t} := \lim_{m\to +\infty} \frac{\delta_{E,t}(m)}{\prod_{\ell| m}
(1-1/\ell)}
\]
where the limit runs over all positive integers ordered by divisibility; this is our predicted constant for Conjecture~\ref{C:main}.  An equivalent definition is
\[
\C_{E,t} = \lim_{Q\to +\infty} \frac{\delta_{E,t}\big(t\prod_{\ell\leq
Q}\ell\big)}{\prod_{\ell\leq Q} (1-1/\ell)}
\]
since for $m$ divisible by $t\prod_{\ell|t}\ell$, we have $\delta_{E,t}(m) = \delta_{E,t}(t{\prod}_{\ell|m} \ell)$.
\end{definition}

We shall see in \S\ref{S:nonCM constant} and \S\ref{S:CM constant}, that the limits of Definition~\ref{D:C defn} do indeed converge, and hence $\C_{E,t}$ is well-defined.  It will also be apparent that $\C_{E,t}=0$ if and only if $\delta_{E,t}(m)=0$ for some $m$; this gives the following qualitative version of our conjecture:

\begin{conj}
Let $E$ be an elliptic curve over a number field $K$, and let $t$ be a positive integer.   There are infinitely many $\p\in \Sigma_K$ for which $|E(\FF_\p)|/t$ is prime if and only if there are no ``congruence obstructions'', i.e., for every $m\geq 1$ there exists a prime $\p\in \Sigma_K-S_E$ with $\p\nmid m$ such that $|E(\FF_\p)|/t$ is invertible modulo $m$.
\end{conj}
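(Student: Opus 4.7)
The plan is to deduce the qualitative statement from the quantitative Conjecture~\ref{C:main}, combined with the convergence analysis of $\C_{E,t}$ deferred to \S\ref{S:nonCM constant}--\ref{S:CM constant}.

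The forward direction is immediate. If infinitely many $\p \in \Sigma_K - S_E$ have $|E(\FF_\p)|/t$ prime, then given any $m \geq 1$ we may pick such a $\p$ with $\p \nmid m$ (only finitely many $\p$ divide $m$) and with $|E(\FF_\p)|/t > m$; being a prime exceeding $m$, it is coprime to $m$ and hence invertible modulo $m$.

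For the converse I would first translate the ``no congruence obstruction'' hypothesis into Galois-theoretic data: via (\ref{E:rep connection}) and the Chebotarev density theorem, the existence of a $\p \nmid m$ with $|E(\FF_\p)|/t$ invertible modulo $m$ corresponds to $G(m) \cap \Psi_t(m) \neq \emptyset$, i.e.\ to $\delta_{E,t}(m) > 0$. Thus absence of obstructions is equivalent to $\delta_{E,t}(m) > 0$ for every $m \geq 1$. If instead some $\delta_{E,t}(m_0)$ vanishes, the empty intersection $G(m_0) \cap \Psi_t(m_0) = \emptyset$ forces every $\p \notin S_E$ with $\p \nmid m_0$ to produce an $|E(\FF_\p)|/t$ sharing a prime factor with the fixed integer $m_0/\gcd(m_0,t)$; if $|E(\FF_\p)|/t$ is itself prime, it must equal one of the finitely many prime divisors of that integer, so $|E(\FF_\p)|$ is bounded and Hasse's inequality then forces $N(\p)$ into a bounded range, leaving only finitely many admissible $\p$. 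When instead $\delta_{E,t}(m) > 0$ for all $m$, the forthcoming convergence argument of \S\ref{S:nonCM constant}--\ref{S:CM constant} yields $\C_{E,t} > 0$, whereupon Conjecture~\ref{C:main} gives $P_{E,t}(x) \to \infty$ and in particular infinitely many such $\p$.

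The main obstacle is Conjecture~\ref{C:main} itself. Reducing the qualitative form to the quantitative one is a routine exercise in separating finite-exception, positive-density, and Chebotarev considerations, but the quantitative conjecture is a Bateman--Horn-type assertion for the sequence $\{|E(\FF_\p)|\}_\p$ and lies well beyond current techniques; even the original case $K = \QQ$, $t = 1$ remains open. An unconditional proof would require a sieve argument capable of incorporating the Galois-theoretic non-independence that motivated the refined constant in the first place.
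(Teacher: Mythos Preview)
Your reduction of the qualitative statement to Conjecture~\ref{C:main} via the equivalence $\C_{E,t}=0 \Leftrightarrow (\exists\, m)\,\delta_{E,t}(m)=0$ is exactly how the paper arrives at this conjecture, and you are right to flag that the converse direction is conditional on the open quantitative conjecture. One small wrinkle: for a fixed $m$, equation~(\ref{E:rep connection}) gives invertibility of $|E(\FF_\p)|/t$ modulo $m/\gcd(m,t)$ rather than modulo $m$, so the single-$m$ biconditional you wrote is not quite right, but since you quantify over all $m$ the two conditions coincide (replace $m$ by $tm$) and the argument goes through.
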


\subsection{The constant for non-CM elliptic curves} \label{S:nonCM constant}
The following renowned theorem of Serre, gives the general structure of the groups $G(m)$.

\begin{thm}[Serre \cite{SerreInv}]  \label{T:openimage}
Let $E/K$ be an elliptic curve without complex multiplication.  There is a positive integer $M$ such that if $m$ and $n$ are positive integers with $n$ relatively prime to $Mm$, then
\[
 G(mn) = G(m) \times \Aut(E[n]).
\]
\end{thm}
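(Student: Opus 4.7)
The plan is to deduce this from Serre's adelic open image theorem: the image of the continuous adelic representation
\[
\rho : \Gal(\Kbar/K) \to \GL(\Zhat) = \prod_\ell \GL(\ZZ_\ell),
\]
assembled from the $\rho_m$ via compatible bases, is \emph{open} in $\GL(\Zhat)$. Let $H$ denote this image.

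First, I would use openness to produce $M$. Since the principal congruence subgroups $\Gamma(N) := \ker(\GL(\Zhat) \to \GL(\ZZ/N\ZZ))$ form a neighborhood basis of the identity, there exists $M \geq 1$ with $\Gamma(M) \subseteq H$. To obtain the stated factorization, fix $m$ and $n$ with $\gcd(n, Mm) = 1$ and consider the subgroup $\Gamma_m \subseteq \Gamma(M)$ consisting of elements whose $\ell$-component is the identity at every prime $\ell$ dividing $m$. The product decomposition $\GL(\Zhat) = \prod_\ell \GL(\ZZ_\ell)$, together with the fact that every prime dividing $n$ is coprime to $M$ (so that the $\ell$-component of $\Gamma(M)$ is all of $\GL(\ZZ_\ell)$ for such $\ell$), shows that the image of $\Gamma_m$ in $\GL(\ZZ/mn\ZZ) \cong \GL(\ZZ/m\ZZ) \times \GL(\ZZ/n\ZZ)$ equals $\{I\} \times \Aut(E[n])$. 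Since $\Gamma_m \subseteq H$, we deduce $\{I\} \times \Aut(E[n]) \subseteq G(mn)$, and combining this with the surjection $G(mn) \twoheadrightarrow G(m)$ forces $G(mn) = G(m) \times \Aut(E[n])$, as required.

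The substantive content is Serre's openness theorem itself, which I would approach in two stages. Stage (a): at each fixed prime $\ell$, show that the $\ell$-adic image $H_\ell \subseteq \GL(\ZZ_\ell)$ is open. Via $\ell$-adic Lie theory one passes to the Lie algebra of $H_\ell$ inside $\mathfrak{gl}_2(\QQ_\ell)$ and rules out every proper Lie subalgebra: openness of the determinant (through the Weil-pairing identification of $\det\circ\rho_{\ell^\infty}$ with the $\ell$-adic cyclotomic character) prevents confinement to $\mathfrak{sl}_2$; a Borel subalgebra would supply an isogeny-stable line (excluded for non-CM $E$); a Cartan subalgebra would force $E$ to have complex multiplication. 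Stage (b): show that $G(\ell) = \Aut(E[\ell])$ for all but finitely many $\ell$. Here one invokes Dickson's classification of subgroups of $\GL(\FF_\ell)$---Borels, split and nonsplit Cartans with their normalizers, and exceptional subgroups with projective image $A_4$, $S_4$, or $A_5$---and excludes each non-full case for large $\ell$ using arithmetic input from $E$: inertia at primes of bad reduction, the shape of Frobenius traces, and the Hasse bound.

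The main obstacle is Stage (b): controlling the image of $\rho_\ell$ uniformly in $\ell$ demands delicate use of the arithmetic of $E$ and is the core technical achievement of Serre's original \textit{Inventiones} paper. Stage (a) and the passage from openness to the stated factorization are comparatively soft, being respectively a finite per-$\ell$ Lie-algebraic case check and the elementary Chinese remainder argument sketched above.
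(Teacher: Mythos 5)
The paper offers no proof of this statement---it is quoted directly as Serre's theorem from \cite{SerreInv}---so there is no internal argument to compare against; I can only assess your proposal on its own terms. Your reduction of the stated factorization to adelic openness is correct: once $\Gamma(M)\subseteq H$, the subgroup $\Gamma_m$ you construct does map onto $\{I\}\times\Aut(E[n])$ inside $\Aut(E[m])\times\Aut(E[n])$, precisely because every prime dividing $n$ is prime to $Mm$ and hence the corresponding $\ell$-components of $\Gamma_m$ are all of $\GL(\ZZ_\ell)$; and the concluding Goursat-type step (a subgroup of $G(m)\times\Aut(E[n])$ that contains $\{I\}\times\Aut(E[n])$ and surjects onto $G(m)$ must be the full product) is sound. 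This is exactly how Theorem~\ref{T:openimage} is extracted from the open-image theorem in practice. One caution about your sketch of the openness theorem itself: stages (a) and (b) alone do not yield openness of the \emph{adelic} image. One must still glue the local statements, which in Serre's argument rests on group theory of $\prod_\ell \SL(\ZZ_\ell)$: for $\ell\geq 5$ a closed subgroup of $\SL(\ZZ_\ell)$ surjecting onto $\SL(\ZZ/\ell\ZZ)$ is the whole group, and the simple groups $\PSL(\FF_\ell)$ are pairwise non-isomorphic, so residual surjectivity at almost all $\ell$ together with openness at the finitely many remaining $\ell$ upgrades to openness of the product image. If, like the paper, you treat the adelic theorem as a citable black box, your argument is complete as written; if you intend to prove it, that gluing lemma needs to be added to your list of ingredients.
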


\begin{prop} \label{P:how to}
Let $E/K$ be an elliptic curve without complex multiplication, and let $t$ be a positive integer.  Let $M$ be a positive integer such that
\[
 G\Big(t \prod_{\ell| tm}\ell\Big) = G\Big(t \prod_{\ell|t\gcd(M,m)}
\ell\Big) \times \prod_{\ell|m,\, \ell\nmid tM} \Aut(E[\ell])
\]
for all (squarefree) $m$ (in particular, one can take $M$ as in Theorem~\ref{T:openimage}). Then
\[
 \C_{E,t} = \dfrac{ \delta_{E,t}\big(t\prod_{\ell|tM}\ell\big)}{\prod_{\ell|tM} (1-1/\ell)} \prod_{\ell \nmid tM} \Big(1 - \frac{\ell^2-\ell-1}{(\ell-1)^3(\ell+1)}\Big).
\]
\end{prop}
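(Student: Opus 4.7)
The plan is to exploit the equivalent characterization of $\C_{E,t}$ in Definition~\ref{D:C defn}, namely $\C_{E,t} = \lim_{Q \to \infty} \delta_{E,t}(m_Q)/\prod_{\ell \le Q}(1 - 1/\ell)$ where $m_Q := t\prod_{\ell \le Q}\ell$. Fix $Q_0$ so that every prime dividing $tM$ satisfies $\ell \le Q_0$, and take $Q \ge Q_0$. Specializing the hypothesis to $m = \prod_{\ell \le Q}\ell$ (squarefree; note that $\rad(tm) = m$ since all primes of $t$ are $\le Q$, so the argument on the left is indeed $m_Q$) yields
\[
G(m_Q) \;=\; G\Big(t{\prod}_{\ell|tM}\ell\Big) \times \prod_{\substack{\ell \le Q \\ \ell \nmid tM}} \Aut(E[\ell]).
\]
Next I would verify that $\Psi_t$ is compatible with the Chinese remainder decomposition: if $m = m_1 m_2$ with $\gcd(m_1,m_2) = 1$, then $(A_1, A_2) \in \Aut(E[m_1]) \times \Aut(E[m_2])$ lies in $\Psi_t(m)$ exactly when $A_i \in \Psi_t(m_i)$ for each $i$, because $\det(I - \cdot)$ factors componentwise and the coset $t\cdot(\ZZ/m\ZZ)^\times$ corresponds under CRT to $(t \cdot (\ZZ/m_1\ZZ)^\times) \times (t \cdot (\ZZ/m_2\ZZ)^\times)$. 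Combining this with the product decomposition of $G(m_Q)$ gives
\[
\delta_{E,t}(m_Q) \;=\; \delta_{E,t}\Big(t{\prod}_{\ell|tM}\ell\Big) \cdot \prod_{\substack{\ell \le Q \\ \ell \nmid tM}} \delta_{E,t}(\ell).
\]

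The key local computation is $\delta_{E,t}(\ell)$ for $\ell \nmid tM$. Here $G(\ell) = \Aut(E[\ell]) \cong \GL(\FF_\ell)$ and, since $t \in (\ZZ/\ell\ZZ)^\times$, the set $\Psi_t(\ell)$ consists of $A \in \GL(\FF_\ell)$ for which $1$ is not an eigenvalue. I would count the complement by stratifying over the conjugacy classes of elements in $\GL(\FF_\ell)$ fixing a nonzero vector: the identity, the nontrivial unipotent class (size $\ell^2 - 1$), and the semisimple classes of $\mathrm{diag}(1,\lambda)$ for $\lambda \in \FF_\ell \setminus \{0,1\}$ (each of size $\ell(\ell+1)$). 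Summing gives $|\GL(\FF_\ell) \setminus \Psi_t(\ell)| = \ell(\ell^2 - 2)$, whence $\delta_{E,t}(\ell) = 1 - (\ell^2 - 2)/((\ell - 1)^2(\ell + 1))$. A routine algebraic simplification then confirms
\[
\frac{\delta_{E,t}(\ell)}{1 - 1/\ell} \;=\; 1 - \frac{\ell^2 - \ell - 1}{(\ell-1)^3(\ell+1)}.
\]

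Finally I would divide the factorization of $\delta_{E,t}(m_Q)$ by $\prod_{\ell\le Q}(1 - 1/\ell) = \prod_{\ell|tM}(1 - 1/\ell) \cdot \prod_{\ell \le Q,\,\ell \nmid tM}(1 - 1/\ell)$ and let $Q \to \infty$. The resulting infinite product over $\ell \nmid tM$ converges absolutely since $(\ell^2 - \ell - 1)/((\ell-1)^3(\ell+1)) = O(\ell^{-2})$, so the limit exists and equals the stated expression. The only genuinely delicate step is the local enumeration on $\GL(\FF_\ell)$; it is elementary but must be carried out with care so that the clean denominator $(\ell-1)^3(\ell+1)$ emerges after the division by $1 - 1/\ell$.
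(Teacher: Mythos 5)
Your proposal is correct and follows essentially the same route as the paper: specialize the hypothesis to $m=\prod_{\ell\le Q}\ell$, factor $\delta_{E,t}$ accordingly (the CRT-compatibility of $\Psi_t$ you spell out is used implicitly in the paper), compute $\delta_{E,t}(\ell)$ for $\ell\nmid tM$ by the eigenvalue-$1$ count, and let $Q\to\infty$. Your local enumeration $|\GL(\FF_\ell)\setminus\Psi_t(\ell)|=\ell(\ell^2-2)$ matches the paper's Lemma~\ref{L:eigenvalues} computation exactly.
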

\begin{proof}
Let $Q$ be a real number greater than $tM$.  From the assumption of the proposition, we have
\[
G(t{\prod}_{\ell\leq Q}\ell) = G(t{\prod}_{\ell|tM}\ell)\times \prod_{\ell\nmid tM, \ell\leq Q} \Aut(E[\ell]).
\]
Therefore
\begin{align*}
\delta_{E,t}\big(t{\prod}_{\ell\leq Q}\ell\big) &= \delta_{E,t}\big(t{\prod}_{\ell|tM}\ell\big) \prod_{\ell\nmid tM, \ell\leq Q} \delta_{E,t}(\ell),
\end{align*}
and hence
\begin{align} \label{E:indepency}
\frac{\delta_{E,t}\big(t{\prod}_{\ell\leq Q}\ell\big)}{{\prod}_{\ell\leq Q}(1-1/\ell)} &= \frac{\delta_{E,t}\big(t{\prod}_{\ell|tM}\ell\big)}{{\prod}_{\ell|tM}(1-1/\ell)} \prod_{\ell\nmid tM, \ell\leq Q} \frac{\delta_{E,t}(\ell)}{1-1/\ell}.
\end{align}
For any $\ell \nmid tM$,  we have
\begin{align*}
\delta_{E,t}(\ell) &= 1 - \frac{|\{A \in \GL(\FF_\ell) : \det(I-A)= 0\}|}{|\GL(\FF_\ell)|}\\ 
& = 1 - \sum_{a\in \FF_\ell^\times} \frac{|\{ A\in \GL(\FF_\ell): \text{the eigenvalues of $A$ are $1$ and $a$}\}|}{|\GL(\FF_\ell)|}
\end{align*}
and by Lemma~\ref{L:eigenvalues} below,
\[
\frac{\delta_{E,t}(\ell)}{1-1/\ell}  = \frac{1}{1-1/\ell} \Big(1 - \frac{(\ell-2)(\ell^2+\ell)   + 1\cdot \ell^2 }{{\ell(\ell-1)^2(\ell+1)}}\Big).
\]
A easy calculation then shows that $\dfrac{\delta_{E,t}(\ell)}{1-1/\ell} = 1 - \dfrac{\ell^2-\ell-1}{(\ell-1)^3(\ell+1)}$.  Substituting this into (\ref{E:indepency}), gives
 \[
\frac{\delta_{E,t}\big(t{\prod}_{\ell|tM}\ell\big)}{{\prod}_{\ell|tM}(1-1/\ell)} \prod_{\ell\nmid tM, \ell\leq Q} \Big(1 - \frac{\ell^2-\ell-1}{(\ell-1)^3(\ell+1)}\Big).
\]
Letting $Q\to +\infty$, we deduce that the limit defining $\C_{E,t}$ is convergent and that it has the stated value.
\end{proof}

\begin{lemma} \label{L:eigenvalues}
For $a\in \FF_\ell^\times$,
\[
|\{ A\in \GL(\FF_\ell): \text{the eigenvalues of $A$ are $1$ and $a$}\}| = \begin{cases}
     \ell^2+\ell & \text{if }a\neq 1, \\
     \ell^2 & \text{if } a=1.
\end{cases}
\]
\end{lemma}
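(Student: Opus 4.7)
The plan is to sort the matrices into $\GL_2(\FF_\ell)$-conjugacy classes and count each class by its centralizer, using $|\GL_2(\FF_\ell)| = \ell(\ell-1)^2(\ell+1)$.

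First consider $a\neq 1$. Any $A$ with distinct eigenvalues $1$ and $a$ in $\FF_\ell$ is regular semisimple, hence diagonalizable, and so lies in a single conjugacy class, namely that of $\mathrm{diag}(1,a)$. I would compute its centralizer to be the diagonal torus, of order $(\ell-1)^2$, and conclude that the conjugacy class has size $\ell(\ell-1)^2(\ell+1)/(\ell-1)^2=\ell^2+\ell$. Alternatively, since such an $A$ is determined by the choice of its two one-dimensional eigenspaces, one counts ordered pairs of distinct lines in $\FF_\ell^2$, giving $(\ell+1)\cdot \ell=\ell^2+\ell$ directly.

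Now consider $a=1$. The matrices $A$ have characteristic polynomial $(X-1)^2$, so by Cayley--Hamilton $A = I+N$ with $N^2=0$. I split into two cases according to whether $N=0$ or $N\neq 0$. The former contributes only the identity matrix. The latter is the single unipotent conjugacy class represented by $\bigl(\begin{smallmatrix}1&1\\0&1\end{smallmatrix}\bigr)$; a short computation shows its centralizer consists of the matrices $\bigl(\begin{smallmatrix}b&c\\0&b\end{smallmatrix}\bigr)$ with $b\in\FF_\ell^\times$, which has order $\ell(\ell-1)$, so the class has size $\ell(\ell-1)^2(\ell+1)/(\ell(\ell-1))=\ell^2-1$. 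Adding the two contributions yields $1+(\ell^2-1)=\ell^2$, as claimed.

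There is no real obstacle here: the proof is a direct application of the conjugacy class/centralizer count in $\GL_2$ over a finite field. The only point requiring a moment's care is the $a=1$ case, where one must not forget the identity matrix as a separate conjugacy class in addition to the non-trivial unipotent one.
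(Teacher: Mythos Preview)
Your argument is correct and follows the same route as the paper: both rely on the conjugacy class structure of $\GL_2(\FF_\ell)$, with the paper simply citing the standard table in Lang's \emph{Algebra} (XVIII, Table~12.4) while you compute the relevant centralizers directly. Your version is more self-contained, and the alternative count via ordered pairs of distinct eigenlines for $a\neq 1$ is a nice touch.
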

\begin{proof}
This follows easily from Table~12.4 in \cite{Lang Algebra}*{XVIII}, which describes the conjugacy classes of $\GL(\FF_\ell)$.
\end{proof}

\begin{remark} 
For later reference, we record the following numerical approximation:
\begin{equation} \label{E:numerical constant}
\mathfrak{C} :=\prod_\ell \Bigl(1 - \frac{\ell^2-\ell-1}{(\ell-1)^3(\ell+1)}\Bigr)\approx 0.505166168239435774.
\end{equation}
So to estimate $\C_{E,t}$, it suffices to find $M$ and then compute $\delta_{E,t}(t\prod_{\ell|tM}\ell)$.
\end{remark}

\subsection{The constant for CM elliptic curves} \label{S:CM constant}
Let $E$ be an elliptic curve over a number field $K$ with complex multiplication, and let $R=\End(E_{\Kbar})$.   The ring $R$ is an order in the imaginary quadratic field $F:=R\otimes_\ZZ \QQ$.

For each positive integer $m$, we have a natural action of $R/mR$ on $E[m]$.  The group $E[m]$ is a free $R/mR$-module of rank $1$, so we have a \emph{canonical} isomorphism $\Aut_{R/mR}(E[m]) = (R/mR)^\times$.  If all the endomorphism of $E$ are defined over $K$, then the actions of $R$ and $\Gal(\Kbar/K)$ on $E[m]$ commute, and
hence we may view $\rho_m(\Gal(\Kbar/K))$ as a subgroup of $(R/mR)^\times$.

\begin{prop} \label{P:CMopenimage}
 Let $E$ be an elliptic curve over a number field $K$ with complex multiplication.  Assume that all the endomorphisms in $R=\End(E_{\Kbar})$ are defined over $K$.  There is a positive integer $M$ such that if $m$ and $n$ are positive integers with $n$ relatively prime to $Mn$, then
\[
 G(mn)= G(m) \times (R/n R)^\times.
\]
\end{prop}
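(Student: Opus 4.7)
The plan is to reduce Proposition~\ref{P:CMopenimage} to the CM analogue of Serre's open image theorem, namely that the full adelic representation
\[
\rho \colon \Gal(\Kbar/K) \to \widehat{R}^\times := \varprojlim_{m} (R/mR)^\times
\]
has open image. The hypothesis that every endomorphism in $R = \End(E_{\Kbar})$ is defined over $K$ forces $K$ to contain the CM field $F = R \otimes_\ZZ \QQ$, so that the actions of $R$ and $\Gal(\Kbar/K)$ on each $E[m]$ commute and the image of $\rho_m$ lies in the commutative group $\Aut_{R/mR}(E[m]) = (R/mR)^\times$. By the main theorem of complex multiplication (see, e.g., Silverman's \emph{Advanced Topics}, Ch.~II, \S2), $K(E[m])$ is an abelian extension of $K$ contained in a ray class field of $K$ modulo $mR$; the finiteness of the class number and unit group of $R$ then implies that the image $H := \rho(\Gal(\Kbar/K))$ is an open subgroup of $\widehat{R}^\times$.

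First I would use openness to choose a positive integer $M$ such that $H$ contains the principal congruence subgroup $U_M := \ker\bigl(\widehat{R}^\times \to (R/MR)^\times\bigr)$. Now fix $m,n \geq 1$ with $\gcd(n, Mm) = 1$. The Chinese remainder theorem yields an isomorphism $R/mnR \cong R/mR \times R/nR$, and hence
\[
(R/mnR)^\times \cong (R/mR)^\times \times (R/nR)^\times.
\]
Let $\pi \colon \widehat{R}^\times \to (R/mnR)^\times$ be the reduction map; then $G(mn) = \pi(H)$ sits inside $G(m) \times (R/nR)^\times$, and the projection onto $G(m)$ is surjective by definition.

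Second I would verify equality by exhibiting all of $\{1\} \times (R/nR)^\times$ inside $G(mn)$. Consider the subgroup $U_{Mm} = U_M \cap U_m \subseteq H$. Its image under $\pi$ lands in $\{1\} \times (R/nR)^\times$ (first coordinate kills $U_m$); and since every prime $\ell \mid n$ satisfies $\ell \nmid Mm$, the $\ell$-components of $U_{Mm}$ impose no constraint on $(R/nR)^\times$, so the image of $U_{Mm}$ in the second factor is all of $(R/nR)^\times$. Combining with the surjection onto $G(m)$ and the inclusion $G(mn) \subseteq G(m) \times (R/nR)^\times$ yields the claimed product decomposition $G(mn) = G(m) \times (R/nR)^\times$.

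The main obstacle is a clean invocation of the CM open image statement. The subtlety is that $R$ may be a non-maximal order in $\OO_F$, so that the standard formulation of the main theorem of complex multiplication (which is phrased in terms of $\OO_F$-ideals and ray class fields of $K$ or $F$) must be translated into a statement about the $R$-module structure on torsion. Once openness of $H \subseteq \widehat{R}^\times$ is in hand, the remaining content is a purely group-theoretic manipulation of congruence subgroups that parallels the non-CM case of Theorem~\ref{T:openimage}.
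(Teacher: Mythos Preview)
Your argument is correct and follows the same route as the paper: both reduce the proposition to the open image statement for the adelic representation $\widehat{\rho}\colon \Gal(\Kbar/K)\to \prod_\ell R_\ell^\times$ coming from the theory of complex multiplication, and then observe that the product decomposition is an immediate group-theoretic consequence. You spell out that ``immediate'' step more carefully (via the principal congruence subgroup $U_M$ and the Chinese remainder theorem), whereas the paper simply asserts it; conversely, the paper goes on to give the explicit idelic description $\widehat{\rho}_\ell(a)=\varepsilon(a)\,N_{K_\ell/F_\ell}(a_\ell^{-1})$ from Serre--Tate, which yields a concrete recipe for $M$ (surjectivity of the local norms plus good reduction) that is used later in the CM example of \S\ref{S:CM example}. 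Your flagged subtlety about non-maximal orders is exactly why the paper leans on \cite{SerreTate:GoodReduction} rather than the ray-class-field formulation you cite.
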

\begin{proof}
(For an overview and further references, see \cite{SerreInv}*{\S4.5}) For a prime $\ell$, define $R_\ell=R\otimes_\ZZ \ZZ_\ell$ and $F_\ell=F\otimes_\QQ \QQ_\ell$.  Let $T_\ell(E)$ be the $\ell$-adic Tate module of $E$ (i.e, the inverse limit of the groups $E[\ell^i]$ with multiplication by $\ell$ as transition maps).  The Tate module
$T_\ell(E)$ is a free $R_\ell$-module of rank $1$ (see the remarks at the end of \S4 of \cite{SerreTate:GoodReduction}); we thus have a \emph{canonical} isomorphism $\Aut_{R_\ell}(T_\ell (E)) = R_\ell^\times$.  The actions of $\Gal(\Kbar/K)$ and $R_\ell$ on $T_\ell(E)$ commute with each other (since we have assumed that all
the endomorphisms of $E$ are defined over $K$).  Combining our representations $\rho_{\ell^i}$ gives a Galois representation
\[
\widehat\rho_\ell \colon \Gal(\Kbar/K) \to \Aut_{R_\ell}(T_\ell (E)) = R_\ell^\times.
\]
The theory of complex multiplication implies that the representation
\[
\widehat{\rho}:=\prod_{\ell} \widehat\rho_\ell \colon \Gal(\Kbar/K) \to \prod_{\ell} R_\ell^\times
\]
has open image; our proposition is an immediate consequence.\\

We now describe the representation $\widehat{\rho}$ in further detail (this will be useful later when we actually want to compute a suitable $M$).  Since the endomorphism in $R$ are defined over $K$, the action of $R$ on the Lie algebra of $E$ gives a homomorphism $R\to K$.  This allows us to identify $F$ with a subfield of $K$.  By class field theory, we may
view $\widehat\rho_\ell$ as a continuous homomorphism $I \to R_\ell^\times\subseteq F_\ell^\times$ that is trivial on
$K^\times$, where $I$ is the group of ideles of $K$ with its standard topology.  For each prime $\ell$, define $K_\ell := K \otimes_\ZZ \QQ_\ell = \prod_{\p | \ell} K_\p$.  For an element $a\in I$, let $a_\ell$ be the component of $a$ in $K_\ell^\times$.   From \cite{SerreTate:GoodReduction}*{\S4.5 Theorems 10 \& 11}, there is a unique homomorphism $\varepsilon\colon I \to F^\times$ such that
\[
\widehat\rho_\ell(a) = \varepsilon(a) N_{K_\ell/F_\ell}(a_\ell^{-1})
\]
for all $\ell$ and $a\in I$.   The homomorphism $\varepsilon$ is continuous and  $\varepsilon(x)=x$ for all $x\in K^\times$.

Since $\varepsilon$ is continuous, there is a set $S\subseteq \Sigma_K$ such that $\varepsilon$ is $1$ on $\prod_{\p\in\Sigma_K -S}\OO_{K,\p}^\times\subseteq I$; in fact, we may take $S=S_E$.  Let $M$ be a positive integer such that
\begin{itemize}
 \item $N_{K_\ell/F_\ell}\colon (\OO_K\otimes \ZZ_\ell)^\times \to R_\ell^\times$ is surjective for all $\ell\nmid M$.
\item
$E$ has good reduction at all $\p\in \Sigma_K$ for which $\p\nmid M$.
\end{itemize}
Take any $b= (b_\ell) \in \prod_\ell R_\ell^\times$ with $b_\ell=1$ for all $\ell|M$.  For each $\ell$, there is an $a_\ell \in(\OO_K\otimes \ZZ_\ell)^\times \subseteq K_\ell^\times$ such that $N_{K_\ell/F_\ell}(a_\ell^{-1})=b_\ell$.  Let $a$ be the corresponding element of $I$ with archimedean component equal to $1$.  Then
\[
\widehat{\rho}(a) = (\widehat\rho_\ell(a))_\ell = (\varepsilon(a) N_{K_\ell/F_\ell}(a_\ell^{-1}))_\ell = (N_{K_\ell/F_\ell}(a_\ell^{-1}))_\ell = (b_\ell)_\ell.
\]
Since $(b_\ell)$ was an arbitrary element of $\prod_{\ell}R_\ell^\times$ with $b_\ell=1$ for $\ell|M$, we conclude
that $\widehat{\rho}(\Gal(\Kbar/K))\supseteq \{1\}\times \prod_{\ell\nmid M} R_\ell^\times$.  Our $M$ thus agrees with the one in the statement of the propostion.
\end{proof}

\begin{prop} \label{P:CM how to}
Let $E$ be an elliptic curve over a number field $K$ with complex multiplication.  Assume that all the endomorphisms in $R=\End(E_{\Kbar})$ are defined over $K$.  Let $\chi$ be the Kronecker character corresponding to the imaginary quadratic extension $F=R\otimes\QQ$ of $\QQ$.  Let $M$ be a positive integer as in Proposition~\ref{P:CMopenimage} which is also divisible by all the primes dividing the discriminant of $F$ or the conductor of the order
$R$.  For any positive integer $t$, we have
\[
 \C_{E,t} = \dfrac{ \delta_{E,t}\big(t\prod_{\ell|tM}\ell\big) }{\prod_{\ell|tM} (1-1/\ell)} \cdot \prod_{\ell \nmid tM} \Big(1 - \chi(\ell) \frac{\ell^2-\ell-1}{(\ell-\chi(\ell))(\ell-1)^2}\Big).
\]
\end{prop}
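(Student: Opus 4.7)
The plan is to imitate the proof of Proposition~\ref{P:how to}, replacing $\Aut(E[\ell]) \cong \GL(\FF_\ell)$ by $(R/\ell R)^\times$ and using Proposition~\ref{P:CMopenimage} in place of Serre's open image theorem. First, fix a real number $Q > tM$. Applying Proposition~\ref{P:CMopenimage} with $n = \prod_{\ell \nmid tM,\,\ell\leq Q}\ell$ (which is coprime to $t M$) gives
\[
G\big(t{\prod}_{\ell\leq Q}\ell\big) = G\big(t{\prod}_{\ell|tM}\ell\big) \times \prod_{\ell\nmid tM,\,\ell\leq Q}(R/\ell R)^\times.
\]
Because the condition $\det(I-A) \in t\cdot (\ZZ/m\ZZ)^\times$ defining $\Psi_t(m)$ in (\ref{E:Psi}) is componentwise under the Chinese remainder theorem, the density $\delta_{E,t}$ is multiplicative over this coprime decomposition:
\[
\delta_{E,t}\big(t{\prod}_{\ell\leq Q}\ell\big) = \delta_{E,t}\big(t{\prod}_{\ell|tM}\ell\big) \prod_{\ell\nmid tM,\,\ell\leq Q}\delta_{E,t}(\ell).
\]
So the task reduces to computing $\delta_{E,t}(\ell)/(1-1/\ell)$ for each $\ell \nmid tM$ and then passing to the limit.

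For such an $\ell$, the hypothesis that $M$ is divisible by the discriminant of $F$ and the conductor of $R$ ensures that $R_\ell$ is the maximal order and $\ell$ is unramified in $F$; hence
\[
R/\ell R \cong \begin{cases} \FF_\ell \times \FF_\ell, & \chi(\ell) = 1,\\ \FF_{\ell^2}, & \chi(\ell) = -1. \end{cases}
\]
Under the canonical identification $\Aut_{R/\ell R}(E[\ell]) = (R/\ell R)^\times$, multiplication by $\alpha$ on $E[\ell]$, viewed as a $2$-dimensional $\FF_\ell$-vector space, has $\det(I - \alpha) = N_{R/\ell R / \FF_\ell}(1-\alpha)$. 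Since $\ell \nmid t$, the condition $\det(I-\alpha) \in t\cdot(\ZZ/\ell\ZZ)^\times$ reduces to requiring that $1-\alpha$ be a unit of $R/\ell R$. An elementary count of the complement gives
\[
1 - \delta_{E,t}(\ell) = \begin{cases} (2\ell-3)/(\ell-1)^2, & \chi(\ell) = 1,\\ 1/(\ell^2-1), & \chi(\ell) = -1, \end{cases}
\]
(in the split case the bad elements are $\{(1,a)\} \cup \{(a,1)\}$; in the inert case only $\alpha = 1$). A short simplification in each case verifies the unified formula
\[
\frac{\delta_{E,t}(\ell)}{1-1/\ell} = 1 - \chi(\ell)\frac{\ell^2-\ell-1}{(\ell-\chi(\ell))(\ell-1)^2}.
\]

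Finally, I would divide the factored expression for $\delta_{E,t}\big(t\prod_{\ell\leq Q}\ell\big)$ by $\prod_{\ell\leq Q}(1-1/\ell)$, substitute the local formula, and let $Q\to\infty$; each Euler factor is $1 + O(1/\ell^2)$, so the product converges absolutely to the stated value of $\C_{E,t}$. The main obstacle is not conceptual but rather bookkeeping: one must verify that $\Psi_t$ is compatible with the coprime decomposition so that the density genuinely factors, and one must correctly match the two elementary counts against the single closed-form expression in $\chi(\ell)$. With those checks the argument is a direct CM-analogue of Proposition~\ref{P:how to}.
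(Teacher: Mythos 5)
Your overall strategy is exactly the paper's: factor $G(t\prod_{\ell\le Q}\ell)$ via Proposition~\ref{P:CMopenimage}, deduce multiplicativity of $\delta_{E,t}$, compute the local factor $\delta_{E,t}(\ell)/(1-1/\ell)$ for $\ell\nmid tM$ using the identification $\det(I-a)=N(1-a)$ on $(R/\ell R)^\times$, and pass to the limit. Your local counts are correct (you split into the cases $R/\ell R\cong\FF_\ell\times\FF_\ell$ and $R/\ell R\cong\FF_{\ell^2}$ and count the complement, whereas the paper writes a single formula in $\chi(\ell)$; both routes land on the same closed form).

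There is, however, a genuine error in your final step. You assert that each Euler factor is $1+O(1/\ell^2)$ and hence that the product converges absolutely. This is false: since $\ell^2-\ell-1\sim\ell^2$ and $(\ell-\chi(\ell))(\ell-1)^2\sim\ell^3$, the factor is
\[
1-\chi(\ell)\frac{\ell^2-\ell-1}{(\ell-\chi(\ell))(\ell-1)^2}=1-\frac{\chi(\ell)}{\ell}+O\Bigl(\frac{1}{\ell^2}\Bigr),
\]
and $\sum_\ell 1/\ell$ diverges, so the product is \emph{not} absolutely convergent. This is precisely the point where the CM case differs from the non-CM case of Proposition~\ref{P:how to}, where the factors really are $1+O(1/\ell^2)$. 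The limit defining $\C_{E,t}$ still exists because it is taken over partial products with $\ell\le Q$ in increasing order, and $\prod_{\ell\le Q}\bigl(1-\chi(\ell)/\ell\bigr)$ converges as $Q\to\infty$ by comparison with the Euler product of $L(s,\chi)$ at $s=1$, using that $\chi$ is a nontrivial Dirichlet character and $L(1,\chi)\neq 0$. You need to replace the absolute-convergence claim with this conditional-convergence argument (this is also how one actually evaluates the constant numerically, as in the proof of Lemma~\ref{L:CM example constant}).
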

\begin{proof}
Let $Q$ be a real number greater than $tM$.  By Proposition~\ref{P:CMopenimage}, we have
\[
 G(t{\prod}_{\ell\leq Q} \ell) =  G(t{\prod}_{\ell|tM}\ell)\times \prod_{\ell\nmid tM, \ell\leq Q} (R/\ell R)^\times.
\]
Therefore
\begin{align*}
\delta_{E,t}\big(t{\prod}_{\ell\leq Q}\ell\big) &= \delta_{E,t}\big(t{\prod}_{\ell|tM}\ell\big) \prod_{\ell\nmid tM,
\ell\leq Q} \delta_{E,t}(\ell),
\end{align*}
and hence
\begin{align} \label{E:CMindepency}
\frac{\delta_{E,t}\big(t{\prod}_{\ell\leq Q}\ell\big)}{{\prod}_{\ell\leq Q}(1-1/\ell)} &= \frac{\delta_{E,t}\big(t{\prod}_{\ell|tM}\ell\big)}{{\prod}_{\ell|tM}(1-1/\ell)} \prod_{\ell\nmid tM, \ell\leq Q} \frac{\delta_{E,t}(\ell)}{1-1/\ell}.
\end{align}
Now take any $\ell \nmid tM$.  Under the identification $\Aut_{R/\ell R}(E[\ell])=(R/\ell R)^\times$, for $a\in (R\ell R)^\times$ we find that $\det(I-a)$ agrees with $N(1-a)$ where $N$ is the norm map from $R/\ell R$ to $\ZZ/\ell\ZZ$.  We then have
\begin{align*}
\delta_{E,t}(\ell) &= \frac{|\{a \in (R/\ell R)^\times : N(1- a) \in (\ZZ/\ell\ZZ)^\times\}|}{|(R/\ell R)^\times|}
= \frac{|\{a \in (R/\ell R)^\times : 1- a \in (R/\ell R)^\times\}|}{|(R/\ell R)^\times|}.
\end{align*}
From our assumptions on $M$, $\ell$ is unramified in $F$ and $R/\ell R= \OO_F /\ell \OO_F$. One can then verify that $|(\OO_F/\ell\OO_F)^\times|=(\ell-1)(\ell-\chi(\ell)),$ and
\[
|\{ a \in (\OO_F/\ell\OO_F)^\times : 1-a \in(\OO_F/\ell\OO_F)^\times\}| = \ell^2 - \big(\chi(\ell)(\ell-2)+(\ell-1) \big).
\]
An easy calculation then shows 
$\dfrac{\delta_{E,t}(\ell)}{1-1/\ell}=1 - \chi(\ell) \frac{\ell^2-\ell-1}{(\ell-\chi(\ell))(\ell-1)^2}$. Substituting this into (\ref{E:CMindepency}), gives
 \[
\frac{\delta_{E,t}\big(t{\prod}_{\ell|tM}\ell\big)}{{\prod}_{\ell|tM}(1-1/\ell)} \prod_{\ell\nmid tM, \ell\leq Q} \Big(1 - \chi(\ell) \frac{\ell^2-\ell-1}{(\ell-\chi(\ell))(\ell-1)^2}\Big).
\]
Letting $Q\to +\infty$, we deduce that the limit defining $\C_{E,t}$ is (conditionally) convergent and has the stated value (the convergence can be seen by a comparison with the Euler product of the $L$-function $L(s,\chi)$ at $s=1$ which converges to a non-zero number).
\end{proof}

\subsubsection{Case where not all the endomorphisms are defined over base field}
Let's now consider the case where not all the endomorphisms of $E$ over $K$.  Choose an embedding $F\subseteq \Kbar$.  The endomorphisms of $E$ are defined over $KF$, and $KF$ is a quadratic extension of $K$.  We break up the conjecture into two cases.\\

\noindent\textbf{Primes that split in $KF$.}  Let $\p\in \Sigma_K-S_E$ be a prime ideal that splits in $KF$; i.e., there are two distinct primes $\Pp_1,\Pp_2 \in \Sigma_{KF}$ lying over $\p$.  The maps $E(\FF_\p)\to E(\FF_{\Pp_i})$ are group isomorphisms.  So we have
\begin{align*}
& |\{ \p \in \Sigma_K(x)-S_E : \p \text{ splits in } KF, \, |E(\FF_{\p})|/t \text{ is prime}\}| \\
=& \frac{1}{2}|\{ \Pp \in \Sigma_{KF}(x)-S_{E_{KF}} : |E(\FF_{\Pp})|/t \text{ is prime}\}| + O(\sqrt{x})
=\frac{1}{2}P_{E_{KF},t}(x) + O(\sqrt{x})
\end{align*}
Therefore Conjecture~\ref{C:main} implies that
\begin{align} \label{E:CM split}
 |\{ \p \in \Sigma_K(x)-S_E : \p \text{ splits in } KF, \, |E(\FF_{\p})|/t \text{ is prime}\}| \sim \frac{\C_{E_{KF},\, t}}{2} \frac{x}{(\log x)^2}
\end{align}
as $x\to \infty$, and the constant $\C_{E_{KF},\,t}$ can be computed as in Proposition~\ref{P:CM how to} (if $\C_{E_{KF},\,t}=0$, then there is a congruence obstruction and the left hand side of (\ref{E:CM split}) is indeed bounded).\\

\noindent\textbf{Primes that are inert in $KF$.}  Let $\p\in\Sigma_K-S_E$ be a prime that is inert in $KF$; i.e., $\p\OO_{KF}$ is a prime ideal of $\OO_{KF}$.  For these primes we always have $|E(\FF_{\p})|=N(\p)+1$, so 
\begin{align*}
& |\{ \p \in \Sigma_K(x)-S_E : \p \text{ is inert in } KF, \, |E(\FF_{\p})|/t \text{ is prime}\}| \\
=& |\{ \p \in \Sigma_K(x): \p \text{ is inert in } KF, \, (N(\p)+1)/t \text{ is prime}\}| + O(1);
\end{align*}
Our conjecture combined with the split case above imply that
\begin{equation} \label{E:CM inert}
|\{ \p \in \Sigma_K(x): \p \text{ is inert in } KF, \, (N(\p)+1)/t \text{ is prime}\}| \sim C \frac{x}{(\log x)^2}
\end{equation}
as $x\to\infty$ where $C=\C_{E,t}- \C_{E_{KF},\,t}/2$.  We can also give the more intrinsic definition
\[
C = \lim_{Q\to +\infty} \frac{\delta'_{t}(t\prod_{\ell\leq Q} \ell)}{\prod_{\ell\leq Q}(1-1/\ell)}
\]
where $\delta'_t(m)$ is the density of the set of $\p\in \Sigma_K$ for which $\p$ is inert in $KF$ and $(N(\p)+1)/t$ is invertible modulo $m/\gcd(t,m)$.   The asymptotics of (\ref{E:CM inert}) depends only on $K$ and $KF$, and not the specific curve $E$; we will not consider this case any further.

\subsection{Heuristics}   \label{SS:heuristics}
We will now give a crude heuristic for Conjecture~\ref{C:main} (one could also give a more systematic heuristic as in \cite{Lang-Trotter}).

The prime number theorem states the number of rational primes less than $x$ is asymptotic to $x/\log x$ as $x\to\infty$.  Intuitively, this means that a random natural number $n$ is prime with probability ${1}/{\log n}$.  This probabilistic model, called \emph{Cram\'er's model}, is useful for making conjectures.  Of course the event ``$n$ is prime'' is deterministic (i.e., has probability 0 or 1).

\emph{If} the primality of the integers in the sequence $\{ |E(\FF_\p)|/t\}_{\p\in\Sigma_K-S_E}$ were assumed to behave like random integers, then the likelihood that $|E(\FF_\p)|/t$ is prime would be
\[
 \frac{1}{\log \big(|E(\FF_\p)|/t\big)} \approx \frac{1}{\log( N(\p)+1) - \log t }
 \]
(the last line is reasonable because of Hasse's bound, $\Big||E(\FF_\p)|-(N(\p)+1)\Big|\leq 2\sqrt{N(\p)}$).

However, the $|E(\FF_\p)|/t$ are certainly not random integers with respect to congruences (in particular, they might not all be integers!).   To salvage our model, we need to take into account these congruences.   Fix a positive integer $m$ which we will assume is divisible by $t\prod_{\ell|t}\ell$.  For all but finitely many $\p$, if $|E(\FF_\p)|/t$ is prime then it is invertible modulo $m$.  The density of $\p\in \Sigma_K-S_E$ for which $|E(\FF_\p)|/t$ is an integer and invertible modulo $m$ is $\delta_{E,t}(m)$, while the density of the set of natural numbers that are invertible modulo $m$ is $\prod_{\ell|m}(1-1/\ell)$.  By taking into account the congruences modulo $m$, we expect 
\[
\frac{\delta_{E,t}(m)}{\prod_{\ell|m}(1-1/\ell)}\cdot \frac{1}{\log(N(\p)+1)-\log t}
\]
to be a better approximation for the probability that $|E(\FF_\p)|/t$ is prime for a ``random'' $\p\in\Sigma_K-S_E$.  Taking into account all possible congruences, our heuristics suggest that $|E(\FF_\p)|/t$ is prime for a ``random'' $\p\in\Sigma_K-S_E$ with probability
\[
\C_{E,t}\cdot \frac{1}{\log(N(\p)+1)-\log t}
\]
where 
\[
 \C_{E,t} = \lim_{Q\to +\infty} \frac{\delta_{E,t}(t\prod_{\ell\leq Q}\ell)}{\prod_{\ell\leq Q}(1-1/\ell)}.
\]
We have already seen that this limit converges.

Using our heuristic model, the expected number of $\p\in\Sigma_{K}(x)-S_E$ such that $|E(\FF_\p)|/t$ is prime, should then be well approximated by
\[
\sum_{\substack{\p\in \Sigma_{K}(x)-S_E \\  N(\p)\geq t}} \frac{\C_{E,t}}{ \log (N(\p)+1) -\log t} \sim \C_{E,t}\int^x_{t+1} \frac{1}{ \log (u+1) -\log t} \frac{du}{\log u}.
\]
The restriction of $\p$ in the above sum to those with $ N(\p)\geq t$ is included simply to ensure that each term of the sum is well-defined and positive.  The integral expression follows from the prime number theorem for the field $K$, and is asymptotic to $x/(\log x)^2$.  We can now conjecture that
\[
P_{E,t}(x)\sim \C_{E,t}\int^x_{t+1} \frac{1}{ \log (u+1) -\log t} \frac{du}{\log u}
\]
as $x\to \infty$.

\begin{remark}  In the setting of Conjecture~\ref{C:original} with $t=1$, Koblitz assumed that the divisibility conditions were independent and hence his constant was $\displaystyle \prod_{\ell} \frac{\delta_{E,1}(\ell)}{1-1/\ell}$
 \end{remark}

\section{Common factor of the $|E(\FF_\p)|$} 
\label{S:common factor}

Let $E$ be an elliptic curve over a number field $K$.  
There may be an integer greater than one which divides almost all of the $|E(\FF_\p)|$; this is an obvious obstruction to the primality of the values $|E(\FF_\p)|$.  Thus it will be necessary to divide by this common factor before addressing any questions of primality.  In this section we describe the common factor and explain how it arises from the global arithmetic of $E$.

The following well-known result says that the $K$-rational torsion of $E$ injects into $E(\FF_\p)$ for almost all $\p$ (for a proof see \cite{KatzInv}*{Appendix}).  Define the finite set
\[
\widetilde{S}_{E}:=S_E \cup \{ \p \in \Sigma_K :   e_\p \geq p-1 \text{ where $\p$  lies over the prime $p$}\}
\]
where $e_\p$ is the ramification index of $\p$ over $p$.
\begin{lemma} \label{torsion injects}
For all $\p\in \Sigma_K-\widetilde{S}_{E}$, reduction modulo $\p$ induces an injective group homomorphism
\[
E(K)_{\tors}\hookrightarrow E(\FF_\p).
\]
In particular, $|E(K)_{\tors}|$ divides $|E(\FF_\p) |$ for all $\p\in\Sigma_K-\widetilde{S}_{E}$.
\end{lemma}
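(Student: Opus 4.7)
The plan is to argue one prime at a time and reduce the claim to the standard fact that the kernel of reduction on a formal group has no torsion when the ramification is mild. Since the lemma is classical (the paper cites the appendix of \cite{KatzInv}), I will describe the line of argument rather than reprove every step.

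First, fix $\p\in\Sigma_K-\widetilde{S}_E$ and let $p$ be the rational prime below $\p$. Pass to the completion $K_\p$, with ring of integers $\OO$, maximal ideal $\m$, and residue field $\FF_\p$. Because $\p\notin S_E$, the curve $E$ has good reduction at $\p$, so its N\'eron model over $\OO$ is smooth and proper and we obtain the usual reduction map $E(K_\p)\to E(\FF_\p)$. The rational torsion embeds tautologically, $E(K)_{\tors}\hookrightarrow E(K_\p)_{\tors}$, so it suffices to show that the kernel $E_1(K_\p)$ of reduction contains no nonzero torsion; once that is done, the divisibility $|E(K)_{\tors}|\mid|E(\FF_\p)|$ follows from Lagrange's theorem.

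Second, identify $E_1(K_\p)$ with a formal group. Choosing a minimal Weierstrass model over $\OO$ and letting $\hat E$ be its associated formal group, one has a canonical isomorphism of abelian groups $E_1(K_\p)\cong \hat E(\m)$, where the right-hand side is $\m$ equipped with the operation coming from the formal group law $F(X,Y)\in\OO[[X,Y]]$. Filtering by the powers $\m^n$ and using that each successive quotient is an $\FF_\p$-vector space shows that $\hat E(\m)$ is a pro-$p$ group, so any torsion in it must have $p$-power order.

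Third, rule out nontrivial $p$-torsion under the ramification hypothesis. Write $[p](T)=pT+a_2T^2+\cdots\in\OO[[T]]$ and note that the coefficients $a_i$ with $i<p$ are divisible by $p$, while the coefficient of $T^p$ is a unit modulo $p$. A Newton-polygon analysis of $[p](T)$ then shows that any nonzero $T\in\m$ with $[p](T)=0$ must satisfy $v_\p(T)\leq e_\p/(p-1)$, so nontrivial $p$-torsion can exist in $\hat E(\m)$ only when $e_\p\geq p-1$. The hypothesis $\p\notin\widetilde{S}_E$ is exactly $e_\p<p-1$, so $\hat E(\m)$ is torsion-free, the reduction map is injective on $E(K_\p)_{\tors}$, and the lemma follows. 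The only real obstacle is this valuation estimate, but it is the standard Newton-polygon computation for the multiplication-by-$p$ endomorphism of a one-dimensional formal group and can be quoted from, e.g., Silverman's \emph{Arithmetic of Elliptic Curves}, VII.3.
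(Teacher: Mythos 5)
The paper does not actually prove this lemma; it simply cites the appendix of Katz's paper, which establishes the statement (for abelian varieties) by exactly the route you take: the kernel of reduction is the group of points of a formal group, which is pro-$p$ and has no $p$-torsion when $e_\p<p-1$. Your write-up is a correct reconstruction of that standard argument, and the key valuation bound $v_\p(T)\leq e_\p/(p-1)$ for a nonzero root of $[p](T)$ is exactly what is proved in Silverman IV.6.1/VII.3.1. One small inaccuracy: the coefficient of $T^p$ in $[p](T)$ is a unit only when the reduction is \emph{ordinary}; in the supersingular case $[p](T)$ reduces mod $p$ to a power series in $T^{p^2}$ and the first unit coefficient occurs at $T^{p^2}$. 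This does not damage the conclusion --- the Newton polygon then yields the even stronger bound $v_\p(T)\leq e_\p/(p^2-1)$, and the argument via $[p](T)=pf(T)+g(T^p)$ gives $v_\p(T)\le e_\p/(p-1)$ uniformly --- but as stated that intermediate claim is false for supersingular primes and should be rephrased.
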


The  integer $|E(\FF_\p)|$ is a $K$-isogeny invariant of the elliptic curve $E$.   So for all $\p \in \Sigma_K - \widetilde{S}_{E}$, we find that $|E(\FF_\p)|$ is divisible by
\begin{equation} \label{E:official t}
t_{E} := \lcm_{E'} |E'(K)_{\tors}|,
\end{equation}
where $E'$ varies over all elliptic curves that are isogenous to $E$ over $K$.  One can also show that
\begin{equation}  \label{E:sup} 
t_{E} = {\max}_{E'} |E'(K)_{\tors}|. 
\end{equation}
From our discussion above, $t_{E}$ divides $|E(\FF_\p)|$ for almost all $\p\in\Sigma_K$ (in particular, Conjecture~\ref{C:main} is only interesting when $t_E$ divides $t$).  The following theorem of Katz shows that $t_{E}$ is the largest integer with this property.

\begin{thm}[Katz \cite{KatzInv}*{Theorem 2(bis)}] 
\label{T:Katz theorem} 
Let $\Sigma$ be a subset of $\Sigma_K -\widetilde{S}_{E}$ with density $1$. 
Then
\[
t_{E} = \gcd_{\p\in \Sigma} |E(\FF_\p)|.
\]
\end{thm}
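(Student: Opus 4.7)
The lower bound $t_E\mid \gcd_{\p\in\Sigma}|E(\FF_\p)|$ is already embedded in the discussion preceding the theorem, so the plan is to establish the reverse divisibility. I would factor over rational primes: fix $\ell$, set $n:=v_\ell\bigl(\gcd_{\p\in\Sigma}|E(\FF_\p)|\bigr)$, and aim to produce a $K$-isogenous elliptic curve $E'$ with $\ell^n\mid |E'(K)_{\tors}|$. Since $t_E$ is by construction the LCM of all the $|E'(K)_{\tors}|$ over $K$-isogenous $E'$, this will give $\ell^n\mid t_E$.

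First, I would convert the hypothesis into Galois-theoretic language. The identity $|E(\FF_\p)|\equiv\det(I-\rho_{\ell^n}(\Fr_\p))\pmod{\ell^n}$ (for $\p\in\Sigma_K-S_E$ with $\p\nmid\ell$), combined with the density-$1$ divisibility $\ell^n\mid |E(\FF_\p)|$ and the Chebotarev density theorem applied to $K(E[\ell^n])/K$, forces
\[
\det(I-g)\equiv 0\pmod{\ell^n}\quad\text{for every }g\in G(\ell^n),
\]
equivalently $\tr(g)\equiv 1+\det(g)\pmod{\ell^n}$. In other words, the character of the $G(\ell^n)$-module $E[\ell^n]$ agrees modulo $\ell^n$ with the character of the reducible module $\mathbf 1\oplus\det$.

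The heart of the argument is to upgrade this character congruence to a structural statement. My target is to exhibit a $G(\ell^n)$-stable subgroup $H\subseteq E[\ell^n]$ such that $E[\ell^n]/H$ contains a $G(\ell^n)$-fixed subgroup of order $\ell^n$. Given such $H$, the curve $E':=E/H$ is $K$-isogenous to $E$ (the isogeny being $K$-defined because $H$ is Galois-stable), the natural map $E[\ell^n]/H\hookrightarrow E'$ embeds this fixed subgroup into $E'(K)_{\tors}$, and hence $\ell^n\mid |E'(K)_{\tors}|\mid t_E$. The degenerate case $H=0$ covers the situation where $E$ itself already has the required rational $\ell^n$-torsion.

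The main obstacle is producing this submodule. Modulo $\ell$, the existence of $H$ follows immediately from Brauer--Nesbitt: the semisimplification of $E[\ell]$ has character $1+\det\bmod\ell$, so its Jordan--H\"older factors are $\mathbf 1$ and $\det\bmod\ell$, and in particular $\mathbf 1$ appears either as a sub (take $H=0$) or as a quotient (take $H$ to be the other rank-one factor). The delicate point is iterating through higher powers of $\ell$, where the group algebra $(\ZZ/\ell^n)[G(\ell^n)]$ need not be semisimple when $\ell$ divides $|G(\ell^n)|$. A standard remedy is to lift the entire setup to the $\ell$-adic Tate module $T_\ell(E)$ and invoke continuous pseudo-character / deformation-theoretic techniques, building $H$ one layer at a time; alternatively one can exploit the Galois-equivariant self-duality of $E[\ell^n]$ provided by the Weil pairing to consistently pair sub-module structure on one side with quotient-module structure on the other. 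Either route produces the required $H$ and completes the proof.
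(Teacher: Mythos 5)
The paper does not prove this statement at all: it is quoted verbatim from Katz \cite{KatzInv}, so your proposal must be measured against Katz's argument rather than anything in the text. Your architecture is the right one, and it is essentially Katz's: the divisibility $t_E \mid \gcd_{\p\in\Sigma}|E(\FF_\p)|$ is the easy direction already contained in \S\ref{S:common factor}; for the converse you correctly reduce, via $|E(\FF_\p)|\equiv\det(I-\rho_{\ell^n}(\Fr_\p))\bmod{\ell^n}$ and Chebotarev (a density-one set meets every Frobenius class, since each class carries positive density), to the implication: if $\det(I-g)\equiv 0\bmod{\ell^n}$ for every $g\in G(\ell^n)$, then $E$ is $K$-isogenous to a curve $E'$ with $\ell^n\mid|E'(K)_{\tors}|$. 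The bookkeeping with $t_E=\lcm_{E'}|E'(K)_{\tors}|$ is also fine.

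The problem is that this last implication is not a routine step to be waved at --- it \emph{is} Katz's theorem, and your proposal proves it only for $n=1$ (Brauer--Nesbitt) while offering two unexecuted suggestions for $n>1$. Neither is an argument as stated. A pseudo-character (trace congruence) does not determine a representation when the residual representation is reducible, which is exactly the situation here since the residual semisimplification is $\mathbf{1}\oplus\det$; so ``continuous pseudo-character / deformation-theoretic techniques'' cannot by themselves manufacture the stable subgroup $H$. What is actually required is a Ribet-style analysis of the Galois-stable lattices in the rational Tate module (equivalently, of the finite stable subgroups of $E[\ell^\infty]$), tracking how deeply modulo $\ell$ the reducibility persists; one must also allow $H\subseteq E[\ell^N]$ with $N$ possibly larger than $n$, a point your setup with $H\subseteq E[\ell^n]$ quietly assumes away. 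Likewise, citing the Weil-pairing self-duality identifies the special feature of dimension one that makes the argument possible, but it is not the induction itself. That no soft argument can close this gap is shown by Katz's own observation that the analogous isogeny statement fails for abelian varieties of higher dimension, even though your character-level congruence derivation goes through verbatim there. So the proposal correctly locates the crux of the proof but leaves it unproved.
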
   

There is an elliptic curve $E'$ which is $K$-isogenous to $E$ satisfying $t_{E}=|E'(K)_{\tors}|$.  Thus our conjecture with $t=t_E$ predicts how frequently the groups $E'(\FF_\p)/E'(K)_{\tors}$ have prime cardinality as $\p$ varies (this was mentioned by Koblitz in the final remarks of \cite{Koblitz} as a natural way to generalize his paper).  Koblitz's original conjecture was restricted to those elliptic curves over $\QQ$ with $t_E=1$.

\begin{remark}
Using the characterization of $t_{E}$ from Theorem~\ref{T:Katz theorem}, we can also express $t_{E}$ in terms of our Galois representations.  It is the largest integer $t$ such that $\det(I-\rho_{t}(g)) \equiv 0 \bmod{t}$ for all $g\in G(t)$.
\end{remark}

\section{Serre Curves} 
\label{S:Serre curve section}

\subsection{The constant $\C_{E,1}$ for Serre curves}
Throughout this section, we assume that $E$ is a elliptic curve over $\QQ$ without complex multiplication.  For each $m\geq1$, we have defined a Galois representation $\rho_m\colon \Gal(\Qbar/\QQ) \to \Aut(E[m]).$  Combining them all together, we obtain a single representation
\[
 \widehat{\rho} \colon \Gal(\Qbar/\QQ) \to \Aut(E_{\tors}) \cong \GL(\Zhat).
\]
A theorem of Serre \cite{SerreInv} says that that the index of $G(m)$ in $\Aut(E[m])$ is bounded by a constant that depends only on $E$; equivalently, $\widehat{\rho}(\Gal(\Qbar/\QQ))$ has finite index in $\GL(\Zhat)$.

Serre has also shown that the map $\widehat{\rho}$ is \emph{never} surjective \cite{SerreInv}*{Proposition 22}.  He proves this by showing that $\widehat{\rho}(\Gal(\Qbar/\QQ))$ lies in a specific index $2$ subgroup $H_E$ of $\Aut(E_{\tors})$ (see \S\ref{SS:the group H} for details).  Following Lang and Trotter, we make the following definition.
\begin{definition}  An elliptic curve $E$ over $\QQ$ is a \emph{Serre curve} if $\widehat{\rho}(\Gal(\Qbar/\QQ))$ is an index $2$ subgroup of $\Aut(E_{\tors})$.
\end{definition}

Serre curves are thus elliptic curves over $\QQ$ whose Galois action on their torsion points are as ``large as possible''.  For examples of Serre curves, see \S\ref{S:main example} and \cite{SerreInv}*{\S5.5}.   Jones has shown that ``most'' elliptic curves over $\QQ$ are Serre curves \cite{Jones-AAECASCF}.  Thus Serre curves are prevalent and we have a complete understanding of the groups $G(m)$ (see below); thus they are worthy of special consideration.  We are particularly interested in Conjecture~\ref{C:main} with $t=1$.

\begin{prop} \label{P:Serre curves constant}
Let $E/\QQ$ be a Serre curve.   Let $D$ be the discriminant of the number field $\QQ(\sqrt{\Delta})$ where $\Delta$  is the discriminant of any Weierstrass model of $E$ over $\QQ$.  Then
\[
\C_{E,1} = 
\begin{cases} 
\displaystyle \mathfrak{C}\bigg(1 +  \prod_{\ell | D }  \frac{ 1}{   \ell^3 - 2\ell^2 - \ell + 3  }\bigg)    & \text{ if $D\equiv 1 \mod{4}$,}\\
\mathfrak{C} & \text{ if $D \equiv 0 \mod{4}$}
\end{cases}
\]
where $\mathfrak{C}=\prod_\ell  \Bigl(1- \frac{\ell^2-\ell-1}{(\ell-1)^3(\ell+1)} \Bigr).$
\end{prop}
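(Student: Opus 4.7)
The approach is to specialize Proposition~\ref{P:how to} using the structure of the Galois image of a Serre curve. By hypothesis, $H_E:=\widehat{\rho}(\Gal(\Qbar/\QQ))$ is the kernel of a unique quadratic character $\epsilon\colon \GL(\Zhat)\to\{\pm 1\}$, and (as will be detailed in \S\ref{SS:the group H})
\[
\epsilon(A) \;=\; \operatorname{sgn}(A_2)\cdot \chi_D(\det A),
\]
where $\operatorname{sgn}$ is the sign character on $\GL(\FF_2)\cong S_3$ (reflecting $\QQ(\sqrt{D})\subseteq\QQ(E[2])$) and $\chi_D$ is the Kronecker character of $D$ (reflecting $\QQ(\sqrt{D})\subseteq\QQ(\zeta_{|D|})$). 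The proof splits according to whether the conductor of $\epsilon$ is squarefree.

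When $D\equiv 0\pmod 4$, the 2-adic component $\chi_{D,2}$ has conductor $4$ or $8$; restricted to the kernel of $\GL(\ZZ/4\ZZ)\to\GL(\FF_2)$, the character $\operatorname{sgn}$ is trivially $1$ while $\chi_{D,2}(\det A)$ is still nontrivial (e.g.\ at $A=\operatorname{diag}(3,1)$ when $\chi_D=\chi_{-4}$), so $\epsilon$ has 2-adic conductor $\geq 4$. Consequently $\epsilon$ does not factor through $\GL(\ZZ/m\ZZ)$ for any squarefree $m$, and $G(m)=\GL(\ZZ/m\ZZ)$ for every squarefree $m$. Applying Proposition~\ref{P:how to} with $M=1$ (the splitting hypothesis is then automatic) collapses the formula to $\C_{E,1}=\mathfrak{C}$.

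When $D\equiv 1\pmod 4$, $|D|$ is odd squarefree and $\epsilon$ has the squarefree conductor $2|D|$, so I set $M=2|D|$ in Proposition~\ref{P:how to}. Using the identity $(1-1/\ell)\bigl(1-\tfrac{\ell^2-\ell-1}{(\ell-1)^3(\ell+1)}\bigr)=\delta(\ell)$, where $\delta(\ell):=|\{A\in\GL(\FF_\ell): \det(I-A)\neq 0\}|/|\GL(\FF_\ell)|$, the formula rewrites as $\C_{E,1}=\mathfrak{C}\cdot\delta_{E,1}(2|D|)/\prod_{\ell\mid 2|D|}\delta(\ell)$. The identity $\mathbf{1}_{H_E}(A)=(1+\epsilon(A))/2$ together with the Chinese Remainder Theorem then gives
\[
\delta_{E,1}(2|D|) \;=\; \prod_{\ell\mid 2|D|}\delta(\ell) \;+\; \prod_{\ell\mid 2|D|}\frac{S_\ell}{|\GL(\FF_\ell)|},
\]
where $S_\ell:=\sum_{A\in\GL(\FF_\ell),\ \det(I-A)\neq 0}\epsilon_\ell(A)$, so the task reduces to evaluating each $S_\ell$.

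For the local computations, at $\ell=2$ a direct inspection shows the only elements of $\GL(\FF_2)$ with $\det(I-A)\neq 0$ are the two order-3 matrices (which correspond to $3$-cycles, of sign $+1$), yielding $S_2/|\GL(\FF_2)|=1/3=\delta(2)$—a factor of $1$. At each odd $\ell\mid D$, $\epsilon_\ell(A)=\bigl(\tfrac{\det A}{\ell}\bigr)$; grouping matrices by $d:=\det A$ and invoking Lemma~\ref{L:eigenvalues} to count $N(d):=|\{A:\det A=d,\ 1\text{ not an eigenvalue}\}|$ gives $N(1)=\ell(\ell^2-\ell-1)$ and $N(d)=\ell(\ell+1)(\ell-2)$ for $d\neq 1$; the identity $\sum_{d\in\FF_\ell^\times}\bigl(\tfrac{d}{\ell}\bigr)=0$ then collapses $S_\ell$ to $N(1)-N(d)_{d\neq 1}=\ell$, so $S_\ell/(|\GL(\FF_\ell)|\delta(\ell))=1/(\ell^3-2\ell^2-\ell+3)$. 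Assembling these factors produces the stated formula. The main obstacle is the conductor analysis at $\ell=2$ in the even-discriminant case—checking carefully that $\operatorname{sgn}$ and $\chi_{D,2}$ cannot conspire to reduce the 2-adic conductor of $\epsilon$ below $4$; once this is settled, the character sum at odd $\ell$ is a short weighted count of conjugacy classes in $\GL(\FF_\ell)$.
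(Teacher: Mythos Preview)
Your proof is correct and follows essentially the same route as the paper: both arguments apply Proposition~\ref{P:how to} with $M=1$ (resp.\ $M=2|D|$) in the even (resp.\ odd) case, both reduce the mod-$2$ contribution to the observation that only the two $3$-cycles in $\GL(\FF_2)$ have $\det(I-A)\neq 0$, and both compute the odd-prime contribution via Lemma~\ref{L:eigenvalues}. Your use of the identity $\mathbf{1}_{H_E}=(1+\epsilon)/2$ to split $\delta_{E,1}(2|D|)$ into a main term plus a factorizable character sum is exactly the paper's decomposition into $\prod_\ell(|Y_\ell^+|+|Y_\ell^-|)+\prod_\ell(|Y_\ell^+|-|Y_\ell^-|)$, just packaged more cleanly; the quantities $|Y_\ell^+|\pm|Y_\ell^-|$ in Lemma~\ref{L:odd Y factors} are precisely your $|\Psi_1(\ell)|$ and $S_\ell$.
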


The proof of Proposition~\ref{P:Serre curves constant} will be given in \S\ref{SS:Serre curve calculations}.

\begin{remark}
\begin{romanenum}
\item
 In the paper \cite{Jones-Ann}, Jones studies the constant $\C_{E,1}$ as $E/\QQ$ varies over certain families of elliptic curves.  The ``main term'' of his results comes from the contribution of the Serre curves.
\item  
There are elliptic curves defined over number fields $K\neq \QQ$ such that $\widehat\rho(\Gal(\Kbar/K))=\Aut(E_{\tors})$ is surjective!  The first example was give by A.~Greicius \cite{Aaron} (also see \cite{Zywina-maximalgalois}).
\end{romanenum}
\end{remark}

\subsection{The group $H_E$} \label{SS:the group H}

We shall now describe the desired group $H_E$ (see \cite{SerreInv}*{p.~311} for further details).  
Let $D$ be the discriminant of the number field $L:=\QQ(\sqrt{\Delta})$ where $\Delta$  is the discriminant of any Weierstrass model of $E$ over $\QQ$ (note that $L$ is independent of the choice of model).  Define the character
\[
\chi_D \colon \Gal(\Qbar/\QQ) \twoheadrightarrow \Gal(L/\QQ) \hookrightarrow\{ \pm 1 \},
\]
where the first map is restriction.  

The field $L$ is contained in $\QQ(E[2])$.  Let $\varepsilon \colon \Aut(E[2]) \to \{\pm 1\}$ be the character which corresponds to the signature map under any isomorphism $\Aut(E[2])\cong \mathfrak{S}_3$. One checks that
$\chi_D(\sigma) = \varepsilon(\rho_2(\sigma))$
for all $\sigma \in \Gal(\Qbar/\QQ)$.

Since $L$ is an abelian extension of $\QQ$, it must lie in a cyclotomic extension\footnote{This is where the assumption $K=\QQ$ is important} of $\QQ$.   Set $d:=|D|;$ it is the smallest positive integer for which $L \subseteq \QQ(\zeta_d)$ where $\zeta_d\in\Qbar$ is a primitive $d$-th root of unity.  The homomorphism $\det \circ \rho_{d} : \Gal(\Qbar/\QQ) \to (\ZZ/d\ZZ)^\times$ factors through the usual isomorphism $\Gal(\QQ(\zeta_{d})/\QQ) \xrightarrow{\sim} (\ZZ/d\ZZ)^\times$.  Thus there exists a unique character $\alpha\colon (\ZZ/d\ZZ)^\times \to \{\pm 1\}$ such that 
$\chi_D(\sigma) =  \alpha( \det \rho_{d}(\sigma))$
for all $\sigma \in \Gal(\Qbar/\QQ)$.  The minimality of $d$ implies that $\alpha$ is a primitive Dirichlet character of conductor $d$.

Combining our two descriptions of $\chi_D$, we have $\varepsilon( \rho_2(\sigma) )=\alpha(\det \rho_{d}(\sigma))$ for all $\sigma \in \Gal(\Qbar/\QQ)$.  
Define the integer $M_E:=\lcm(d,2)$, and the group
\[
 H(M_E):=\big\{ g\in \Aut(E[M_E]) : \varepsilon(A \bmod{2}) = \alpha(\det(A\bmod{d})) \big\}.
\]
which has index $2$ in $\Aut(E[M_E])$.  By the above discussion, $H(M_E)$ contains $G(M_E)$.  The index $2$ subgroup $H_E$ of $\Aut(E_{\tors})$ mentioned earlier is just the inverse image of $H(M_E)$ under the natural map $\Aut(E_{\tors})\to \Aut(E[M_E])$, and $E$ is a Serre curve if and only if $\widehat{\rho}(\Gal(\Qbar/\QQ))=H_E$.

\begin{prop} \label{P:Serre curve explicit}
Let $E/\QQ$ be a Serre curve, and let $m$ be a positive integer.  If $M_E| m$, then the group $G(m)$ is the inverse image of $H(M_E)$ under the natural map $\Aut(E[m]) \to \Aut(E[M_E])$.  If $M_E\nmid m$, then $G(m)=\Aut(E[m])$.
\end{prop}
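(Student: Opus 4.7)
The plan is to describe $G(m)$ as the image under $\pi_m\colon \Aut(E_{\tors}) \twoheadrightarrow \Aut(E[m])$ of a specific index-two subgroup of $\Aut(E_{\tors})$. By the Serre-curve hypothesis, $\widehat{\rho}(\Gal(\Qbar/\QQ)) = H_E = \ker \psi$, where $\psi\colon \Aut(E_{\tors}) \to \{\pm 1\}$ is the pullback along $\Aut(E_{\tors}) \to \Aut(E[M_E])$ of the character $\psi_{M_E}(A) := \varepsilon(A \bmod 2) \cdot \alpha(\det A \bmod d)$, whose kernel is $H(M_E)$. Writing $K_m := \ker \pi_m$, the standard argument exploiting that $\ker\psi$ has index two gives $\pi_m(\ker \psi) = \ker \bar\psi$ when $\psi|_{K_m} \equiv 1$ (where $\bar\psi$ is the induced character on $\Aut(E[m])$), and $\pi_m(\ker \psi) = \Aut(E[m])$ otherwise. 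The proof thus reduces to deciding when $\psi$ vanishes on $K_m$.

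If $M_E \mid m$, then $K_m \subseteq K_{M_E}$, and since $\psi$ factors through $\Aut(E[M_E])$ it is automatically trivial on $K_{M_E}$, hence on $K_m$. The induced character $\bar\psi$ is then the composition $\Aut(E[m]) \to \Aut(E[M_E]) \xrightarrow{\psi_{M_E}} \{\pm 1\}$, so $G(m) = \ker \bar\psi$ equals the preimage of $H(M_E)$ under $\Aut(E[m]) \to \Aut(E[M_E])$, proving the first assertion.

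If $M_E \nmid m$, I will exhibit $A \in K_m$ (i.e., $A \equiv I \bmod m$ in $\Aut(E_{\tors}) = \prod_\ell \GL(\ZZ_\ell)$) with $\psi(A) = -1$. Let $m_\ell, d_\ell$ denote the $\ell$-adic valuations of $m$ and $d$; the hypothesis forces either $m_2 = 0$ or $m_\ell < d_\ell$ for some prime $\ell \mid d$. When $m_2 = 0$, take $A_\ell = I$ for $\ell \neq 2$ and $A_2 = \bigl(\begin{smallmatrix} 0 & -1 \\ 1 & 0 \end{smallmatrix}\bigr) \in \GL(\ZZ_2)$: then $A_2 \bmod 2$ is a transposition in $\GL(\ZZ/2) \cong \mathfrak{S}_3$ and $\det A_2 = 1$, so $\psi(A) = (-1)\cdot \alpha(1) = -1$. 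Otherwise $m_2 \geq 1$, and we fix $\ell \mid d$ with $m_\ell < d_\ell$; decomposing $\alpha = \prod_{\ell \mid d} \alpha_\ell$ into primitive Dirichlet characters $\alpha_\ell$ of $(\ZZ/\ell^{d_\ell}\ZZ)^\times$ via CRT, take $A_{\ell'} = I$ for $\ell' \neq \ell$ and $A_\ell = \operatorname{diag}(x, 1) \in \GL(\ZZ_\ell)$ for a unit $x \equiv 1 \bmod \ell^{m_\ell}$ chosen so that $\alpha_\ell(x) = -1$; then all other $\alpha_{\ell'}$-factors contribute $1$, and $\psi(A) = \alpha_\ell(x) = -1$.

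The main (and essentially only nontrivial) point is the existence of such an $x$: primitivity of $\alpha_\ell$ means it is nontrivial on $\ker((\ZZ/\ell^{d_\ell})^\times \to (\ZZ/\ell^{d_\ell-1})^\times)$, and this kernel is contained in $\{x \equiv 1 \bmod \ell^{m_\ell}\}$ because $m_\ell \leq d_\ell - 1$. For $\ell = 2$ one checks that the inequalities survive the small cases using $d_2 \in \{0, 2, 3\}$ for fundamental discriminants; the only situation where the inclusion could degenerate would be $d_\ell = 1$ with $\ell = 2$, which does not occur.
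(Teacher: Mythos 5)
Your proof is correct, and it supplies in full the "purely group theoretic" argument that the paper explicitly leaves to the reader (the paper only restates the claim as: $H \bmod m = \GL(\ZZ/m\ZZ)$ iff $2\nmid m$ or $d\nmid m$). The index-two character argument, the reduction to finding $A\equiv I \bmod m$ with $\psi(A)=-1$, and the use of primitivity of $\alpha$ (with the check that $d_2\neq 1$) are exactly the intended ingredients, and each step checks out.
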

\begin{proof}
This is a purely group theoretic statement which we leave to the reader.   If $\alpha\colon(\ZZ/d\ZZ)^\times \to \{\pm 1\}$ is a primitive Dirichlet character, define the group
\[
H := \{ A \in \GL(\Zhat) : \varepsilon(A \bmod{2}) = \alpha(\det(A \bmod d))\}.
\]
The proposition simply says that $H \bmod{m} = \GL(\ZZ/m\ZZ)$ if and only if $d|m$ and $2|m$.
\end{proof}

\subsection{Proof of Proposition~\ref{P:Serre curves constant}} 
\label{SS:Serre curve calculations}

Let $E/\QQ$ be a Serre curve, and keep the notation introduced in \S\ref{SS:the group H}. \\

Let's first consider the case where $D\equiv 0 \bmod{4}$.  The integer $M_E=d=|D|$ is divisible by $4$, so by Proposition~\ref{P:Serre curve explicit}, we have $G(m)=\Aut(E[m])$ for all \emph{squarefree} $m$.  By Proposition~\ref{P:how to}, with $t=1$ and $M=1$, we have $\C_{E,1} = \prod_\ell  \big(1- \frac{\ell^2-\ell-1}{(\ell-1)^3(\ell+1)} \big)$.\\

We shall now restrict to the case where $D \equiv 1 \bmod{4}$.  In this case, the integer $M_E=\lcm(2,d)=2d=2|D|$ is squarefree. By Proposition~\ref{P:Serre curve explicit}, we have $G(M_E \cdot m)=H(M_E)\times \Aut(E[m])$ for all \emph{squarefree} $m$ relatively prime to $M_E$.  Thus by Proposition~\ref{P:how to}, with $t=1$ and $M=M_E$, we have 
\begin{equation} \label{E:Serre curve independence}
 \C_{E,1} = \frac{|H(M_E)\cap \Psi_1(M_E)|/|H(M_E)|}{\prod_{\ell|M_E}(1-1/\ell) } \prod_{\ell \nmid M_E}  \Big(1- \frac{\ell^2-\ell-1}{(\ell-1)^3(\ell+1)} \Big).
\end{equation}
Since $d$ is odd and $\alpha$ is a quadratic character of conductor $d$, $\alpha$ is the Jacobi symbol $\left(\tfrac{\cdot}{d} \right)$.  The set $H(M_E)\cap \Psi_1(M_E)$ then has the same cardinality as the set
\[
X=\big\{ A \in \GL(\ZZ/M_E\ZZ) :  \varepsilon(A \bmod{2})= \left( \tfrac{\det(A\bmod d)}{d}\right), \; \det( I - A ) \in (\ZZ/M_E\ZZ)^\times \big\}.
\]
Take any element $A\in X$.  Setting $A_2:=A \bmod{2}$, we have $\det(I-A_2) = 1$ and $\det(A_2)=1$ in $\ZZ/2\ZZ$.  The only matrices in $\GL(\ZZ/2\ZZ)$ that satisfy these conditions are: 
$\big(\begin{smallmatrix}1 & 1 \\1 & 0\end{smallmatrix}\big)$ and $\big(\begin{smallmatrix}0 & 1 \\1 & 1\end{smallmatrix}\big).$
These two matrices have order 3 in $\GL(\ZZ/2\ZZ)$, and hence $\varepsilon(A_2)= 1$.  Since $d$ is odd, $H(M_E)\cap \Psi_1(M_E)$ has twice as many element as the set
\[
Y = \{ A \in \GL(\ZZ/d\ZZ) :  \left( \tfrac{\det(A)}{d}\right) =1, \det( I - A ) \in (\ZZ/d\ZZ)^\times \}.
\]
For each prime $\ell|d$, define the sets
\[
Y_\ell^{\pm} := \big\{ A \in \GL(\ZZ/\ell\ZZ) :  \big(\tfrac{\det(A)}{\ell}\big)= \pm 1,\; \det( I - A )\neq 0 \big\}.
\]
Under the isomorphism  $\GL(\ZZ/ d\ZZ) \cong \prod_{\ell | d} \GL(\ZZ/\ell\ZZ)$, the set $Y$ corresponds to the disjoint union of sets:
\[
 \bigcup_{ \substack{ J \subseteq \{ \ell: \ell | d \} \\ |J| \text{ even}}} \;\prod_{\ell \in J} Y_\ell^- \times \prod_{\ell|d,\, \ell\not\in J} Y_\ell^+.
 \]
Therefore,
\begin{align} \label{E:Serre curve crunch}
|H(M_E)\cap \Psi_1(M_E)| = 2|Y| &= 2\sum_{ f | d } \frac{1+\mu(f)}{2} \prod_{\ell | f} |Y_\ell^-| \prod_{\ell | \frac{d}{f} } |Y_\ell^+|\\
\notag &=  \prod_{\ell | d} (|Y_\ell^+| + |Y_\ell^-|)  +  \prod_{\ell | d} (|Y_\ell^+| - |Y_\ell^-|),
\end{align}
where $\mu$ is the M\"obius function.

\begin{lemma} \label{L:odd Y factors}
For $\ell | d$, 
\[
\frac{|Y_\ell^+| + |Y_\ell^-|}{|\GL(\ZZ/\ell\ZZ)| (1-1/\ell)} =  1 - \frac{\ell^2-\ell-1}{(\ell-1)^3(\ell+1)}\quad \text{ and } \quad \frac{|Y_\ell^+| - |Y_\ell^-|}{|\GL(\ZZ/\ell\ZZ)| (1-1/\ell)}=  \frac{\ell}{(\ell-1)^3(\ell+1)}.
\]
\end{lemma}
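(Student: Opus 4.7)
My plan is to handle the sum and the difference separately, exploiting the fact that both quantities can be related to objects already analyzed in earlier proofs.

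\medskip
\noindent\emph{The sum.} Since $\ell$ is odd, $\chi(a):=\big(\tfrac{a}{\ell}\big)$ takes the value $\pm 1$ on every element of $\FF_\ell^\times$, so
\[
|Y_\ell^+|+|Y_\ell^-| = \big|\{A\in\GL(\FF_\ell): \det(I-A)\neq 0\}\big|.
\]
But this is exactly the quantity computed in the proof of Proposition~\ref{P:how to}: using Lemma~\ref{L:eigenvalues} one finds that it equals $\delta_{E,1}(\ell)\cdot|\GL(\FF_\ell)|$, and dividing through by $|\GL(\FF_\ell)|(1-1/\ell)=(\ell-1)^3(\ell+1)$ yields the first formula directly. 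So the first claim reduces to bookkeeping already done.

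\medskip
\noindent\emph{The difference.} Write
\[
|Y_\ell^+|-|Y_\ell^-| \;=\; \sum_{A\in\GL(\FF_\ell)} \chi(\det A)\cdot \mathbf{1}[\det(I-A)\neq 0].
\]
I would split this as
\[
\sum_{A\in\GL(\FF_\ell)} \chi(\det A) \;-\; \sum_{\substack{A\in\GL(\FF_\ell)\\ \det(I-A)=0}}\chi(\det A).
\]
The first sum vanishes by orthogonality: grouping by determinant, $\sum_{A\in\GL(\FF_\ell)}\chi(\det A)=|\SL(\FF_\ell)|\cdot \sum_{d\in\FF_\ell^\times}\chi(d)=0$. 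For the second sum, a matrix $A\in\GL(\FF_\ell)$ with $\det(I-A)=0$ has eigenvalues $\{1,a\}$ for some $a\in\FF_\ell^\times$, with $\det A=a$. Lemma~\ref{L:eigenvalues} counts such matrices: there are $\ell^2+\ell$ of them when $a\neq 1$ and $\ell^2$ of them when $a=1$. Hence
\[
\sum_{\substack{A\in\GL(\FF_\ell)\\ \det(I-A)=0}}\chi(\det A) \;=\; \chi(1)\cdot\ell^2 \;+\; (\ell^2+\ell)\sum_{a\in\FF_\ell^\times,\, a\neq 1}\chi(a) \;=\; \ell^2 - (\ell^2+\ell) \;=\; -\ell,
\]
where I used $\sum_{a\in\FF_\ell^\times}\chi(a)=0$. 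Therefore $|Y_\ell^+|-|Y_\ell^-|=\ell$, and dividing by $|\GL(\FF_\ell)|(1-1/\ell)=(\ell-1)^3(\ell+1)$ gives the second formula.

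\medskip
\noindent\emph{Assessment.} There is no real obstacle here: the key ideas are the orthogonality of the quadratic character and the count of fixed-eigenvalue matrices already supplied by Lemma~\ref{L:eigenvalues}. The only subtlety is recognizing that the $\chi$-twisted sum over all of $\GL(\FF_\ell)$ vanishes, which reduces the computation to the small and already-classified set of matrices for which $1$ is an eigenvalue. The alternative route via a conjugacy-class decomposition (split semisimple, scalar, Jordan, and non-split classes) also works and would give the same answer, but requires evaluating a character sum over irreducible quadratics $x^2-tx+n$ stratified by constant term $n$, which is noticeably more laborious than the orthogonality argument above.
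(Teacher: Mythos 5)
Your proof is correct and rests on the same ingredients as the paper's: Lemma~\ref{L:eigenvalues} together with the stratification of $\{A:\det(I-A)=0\}$ by the second eigenvalue $a=\det A$. The only difference is organizational — the paper evaluates $|Y_\ell^+|$ and $|Y_\ell^-|$ individually (using that exactly half of $\FF_\ell^\times$ consists of squares) and then forms the sum and difference, whereas you compute the sum and difference directly, with orthogonality of the Legendre symbol killing the main term in the difference; this is a slightly cleaner way to do the same bookkeeping.
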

\begin{proof}
We use Lemma~\ref{L:eigenvalues} to compute the $|Y_\ell^\pm|$:
\begin{align*} 
|Y_\ell^\pm| & 
= |\{ A \in \GL(\ZZ/\ell\ZZ) :  \left(\tfrac{\det A}{\ell}\right) = \pm 1 \}| 
- |\{ A \in \GL(\ZZ/\ell\ZZ) :  \left(\tfrac{\det A}{\ell}\right)= \pm 1,\; \det( I - A ) =0 \}| \\
&= \frac{1}{2}|\GL(\ZZ/\ell\ZZ)|- \sum_{a\in \FF_\ell^\times,\, \left(\tfrac{a}{\ell}\right)=\pm 1} |\{ A \in \GL(\FF_\ell) :  \text{the eigenvalues of $A$ are $1$ and $a$} \}| \\
&= \frac{1}{2}\ell(\ell-1)^2(\ell+1)- \frac{\ell-1}{2}(\ell^2+\ell) + \frac{1}{2}(1\pm 1) \ell
\end{align*}
The rest is a direct calculation.
\end{proof}
Using (\ref{E:Serre curve crunch}), Lemma~\ref{L:odd Y factors}, and $|H(M_E)|=\frac{1}{2}\prod_{\ell|M_E} |\GL(\ZZ/\ell\ZZ)|= 3 \prod_{\ell|d} |\GL(\ZZ/\ell\ZZ)|$, we have:
\begin{align*} 
\frac{|H(M_E)\cap \Psi_1(M_E)|/|H(M_E)|}{\prod_{\ell|M_E}(1-1/\ell) } 
&= \frac{1}{3\cdot\frac{1}{2}}\Big(  \prod_{\ell | d} \frac{|Y_\ell^+| + |Y_\ell^-|}{|\GL(\ZZ/\ell\ZZ)| (1-1/\ell)}  +  \prod_{\ell | d} \frac{|Y_\ell^+| - |Y_\ell^-|}{|\GL(\ZZ/\ell\ZZ)| (1-1/\ell)} \Big)\\
&= \frac{2}{3}\bigg(  \prod_{\ell | d} \Big(1 - \frac{\ell^2-\ell-1}{(\ell-1)^3(\ell+1)}\Big)  +  \prod_{\ell | d} \frac{\ell}{(\ell-1)^3(\ell+1)} \bigg)\\
&= \frac{2}{3} \prod_{\ell | d} \Big(1 - \frac{\ell^2-\ell-1}{(\ell-1)^3(\ell+1)}\Big) \Bigg(  1 + 
   \prod_{\ell | d }  \dfrac{  \frac{\ell }{(\ell-1)^3(\ell+1)}}{   1- \frac{\ell^2-\ell-1}{(\ell-1)^3(\ell+1)}    } \Bigg) \\
 &= \prod_{\ell | M_E=2d} \Big(1 - \frac{\ell^2-\ell-1}{(\ell-1)^3(\ell+1)}\Big) \Bigg(  1 + 
   \prod_{\ell | d } \frac{1}{\ell^3 - 2\ell^2 - \ell + 3} \Bigg).
\end{align*}
Proposition~\ref{P:Serre curves constant} follows by combining this expression with (\ref{E:Serre curve independence}) and noting that $d=|D|$.

\section{Example: $y^2 =x^3+6x-2$} \label{S:main example}
In this section, we consider the elliptic curve $E$ over $\QQ$ defined by the Weierstrass equation $y^2=x^3+6x-2$.     
This curve is a Serre curve; for a proof, see \cite{Lang-Trotter}*{Part I \S7}. 

The given Weierstrass model has discriminant $\Delta=-2^6 3^5$, and hence $\QQ(\sqrt{\Delta})$ has discriminant $-3$.  By Proposition \ref{P:Serre curves constant} and (\ref{E:numerical constant}),
\begin{equation} \label{E:main example C}
\C_{E,1} = \frac{10}{9} \prod_{\ell}  \Bigl(1- \frac{\ell^2-\ell-1}{(\ell-1)^3(\ell+1)} \Bigr)
\approx 0.5612957424882619712979385\ldots
\end{equation}
In the following table, the ``expected number'' of $p\leq x$ with $p\nmid 6$ such that $|E(\FF_p)|$ is prime is
\begin{equation} \label{E:prediction}
\C_{E,1}\int_{2}^x \frac{1}{\log(u+1) }\frac{du}{\log u}
\end{equation}
rounded to the nearest integer.

\begin{table}[H]
\caption{Number of $p\leq x$ with $p\nmid 6$ such that $|E(\FF_p)|$ is prime.}
\begin{tabular}{lcc | lcc} \label{T:1}
	$x$
        & Actual
        & Expected
        & $x$
        & Actual
        & Expected
        \\ \hline \hline
20000000    &45285    &45592 & 520000000 & 810038 &810610\\
40000000    &83272    &83564 & 540000000 & 837904 & 838429\\
60000000    &118991   &119317& 560000000 & 865500 &866145\\
80000000    &153257   &153735& 580000000 & 893592 &893763\\
100000000   &186727   &187209& 600000000 & 921156 &921287\\
120000000   &219604   &219958& 620000000 & 948710 &948720\\
140000000   &251728   &252123& 640000000 & 975828 &976066\\
160000000   &283381   &283799& 660000000 & 1003310 &1003328\\
180000000   &314686   &315058& 680000000 & 1030626 &1030508\\
200000000   &345255   &345953& 700000000 & 1057836 &1057610\\
220000000   &375910   &376526& 720000000 & 1084734 &1084636\\
240000000   &406162   &406810& 740000000 & 1111877 &1111589\\
260000000&  436059&   436833 & 760000000 & 1138685 &1138470\\
280000000&  465712&   466619 & 780000000 & 1165267 &1165282\\
300000000&  495338&   496186 & 800000000 & 1192027 &1192027\\
320000000&  524820&   525552 & 820000000 & 1218668 &1218707\\
340000000&  553850&   554731 & 840000000 & 1245563 &1245324\\
360000000&  583047&   583736 & 860000000 & 1272004 &1271878\\
380000000&  611978&   612577 & 880000000 & 1298490 &1298373\\
400000000&  640571&   641265 & 900000000 & 1324972 &1324810\\
420000000&  668855&   669809 & 920000000 & 1351413 &1351190\\
440000000&  697006&   698216 & 940000000 & 1377897 &1377514\\
460000000&  725494&   726493 & 960000000 & 1404065 &1403784\\
480000000&  753548&   754648 & 980000000 & 1430213 & 1430001\\
500000000&  781819&   782685 & 1000000000 & 1456288 &1456166\\
 \hline
\end{tabular}
\end{table}  

\begin{remark} 
The predicted constant in \cite{Koblitz} was $9/10\cdot \C_{E,1}$.  This  would have led to a predicted value of $\approx 1310549$ in the last entry of Table~\ref{T:1}.
\end{remark}

\section{Example: $y^2 = x^3 + 9x+18$}
\label{S:Jones example}
Let $E$ be the elliptic curve over $\QQ$ defined by the Weierstrass equation $y^2 = x^3 + 9x+18$.  The discriminant of our Weierstrass model is $\Delta=-2^8 3^6$.  This is the curve mentioned in \S\ref{SS:an example}.  It is not isogenous over $\QQ$ to a curve with nontrivial $\QQ$-torsion (in the notation of \S\ref{S:common factor}, $t_E=1$), but we have $\C_{E,1}=0$.   We saw that $|E(\FF_p)|$ was divisible by $3$ if $p\equiv 1\bmod{4}$ and divisble by $2$ if $p\equiv 3\bmod{4}$.   In this section we will give numerical evidence for Conjecture~\ref{C:main} with $t \in \{2,3,6\}$.\\

We now state, without proof, enough information about the groups $G(m)$ so that one may compute the constants $\C_{E,2}$, $\C_{E,3}$ and $\C_{E,6}$.  \\

\noindent $\bullet$ \textbf{$2$-torsion.} We have $G(2)=\Aut(E[2])$.  

\noindent $\bullet$ \textbf{$4$-torsion.} Viewing $\Aut(E[2])$ as the symmetric group on $E[2]-\{0\}$, let $\varepsilon\colon\Aut(E[4])\to  \Aut(E[2])\to \{\pm 1\}$ be the signature homorphism.  Let $\chi$ be the non-identity character of $(\ZZ/4\ZZ)^\times$.  Then $\QQ(\sqrt{\Delta})=\QQ(i)$ implies that $G(4)$ is contained in the group
\[
\{ A \in \Aut(E[4]): \varepsilon(A) = \chi(\det(A)) \},
\]
and this is actually an equality.   We then have
\[
\rho_4\big(\Gal(\Qbar/\QQ(i))\big)=\{ A \in \Aut(E[4]): \varepsilon(A) = \chi(\det(A))=1 \}.
\]
The maximal abelian extension of $\QQ(i)$ in $\QQ(E[4])$ is $\QQ(i,\alpha,\sqrt{6})$ where $\alpha$ is a root of $x^3+9x+18$. (Group theory with $G(4)$ tells us that it is a degree six extension of $\QQ(i)$.  In general, one always has $\QQ(i,\sqrt[4]{\Delta}) \subseteq \QQ(E[4]);$ for our curve $\QQ(i,\sqrt[4]{\Delta})=\QQ(i,\sqrt[4]{-9})=\QQ(i,\sqrt{6})$.)

\noindent $\bullet$ \textbf{$3$-torsion.}  Choose a $\ZZ/3\ZZ$-basis of $E[3]$ whose first vector is $P:=(-3,6i)$.  Then with respect to this basis, $G(3)=\rho_3(\Gal(\Qbar/\QQ))$ is the subgroup of upper triangular matrices in $\GL(\ZZ/3\ZZ)$.  

\noindent $\bullet$ \textbf{$9$-torsion.}  The group $G(9)$ is the inverse image of $G(3)$ under the map $\Aut(E[9])\to \Aut(E[3])$.  The maximal abelian extension of $\QQ(i)$ in $\QQ(E[9])$ is $\QQ(i,\zeta_9)$.  Let $\beta\colon G(9) \to \{\pm1\}$ be the homomorphism for which $\sigma(P)=\beta(\rho_9(\sigma))P$ for all $\sigma \in \Gal(\Qbar/\QQ)$.

\noindent $\bullet$ \textbf{$36$-torsion.}  We may view $G(36)$ as a subgroup of $G(4)\times G(9)$, where we have already described $G(4)$ and $G(9)$.  To work out $G(36)$, one needs to know the field $\QQ(E[4])\cap \QQ(E[9])$.  We claim that $\QQ(E[4])\cap \QQ(E[9])=\QQ(i)$.
Suppose that $\QQ(E[4])\cap \QQ(E[9])\supsetneq\QQ(i)$; then the solvability of $G(4)$ implies that there is a nontrivial abelian extension $L/\QQ(i)$ in $\QQ(E[4])\cap \QQ(E[9])$.  However the maximal abelian extension of $\QQ(i)$ in $\QQ(E[4])$ and $\QQ(E[9])$ is $\QQ(i,\alpha,\sqrt{6})$ and $\QQ(i,\zeta_9)$, respectively. Thus $L\subseteq \QQ(i,\alpha,\sqrt{6}) \cap \QQ(i,\zeta_9)=\QQ(i).$  We deduce that 
\[
G(36) = \big\{ (A,B) \in \Aut(E[4])\times \Aut(E[9]) : \varepsilon(A) = \chi(\det(A))= \beta(B) \big\}
\]
and 
\[
\rho_{36}(\Gal(\Qbar/\QQ)) = \big\{ (A,B) \in \Aut(E[4])\times \Aut(E[9]) : \varepsilon(A) = \chi(\det(A))= \beta(B)=1 \big\}.
\]

\noindent $\bullet$ \textbf{$5$-torsion.}  The group $G(5)$ is the unique subgroup of $\Aut(E[5])$ of order $96$.  The image in $\operatorname{PGL}_2(\ZZ/5\ZZ)$ is isomorphic to the symmetric group $S_4$ (this is one of the exceptional cases in \cite{SerreInv}*{Prop.~16}).  The maximal abelian extension of $\QQ$ in $\QQ(E[5])$ is $\QQ(\zeta_5)$.

\noindent $\bullet$ \textbf{$\ell$-torsion, $\ell\geq 7$.}
For every prime $\ell\geq 7$, we have $G(\ell)=\Aut(E[\ell])$.  Group theory shows that for any squarefree positive integer $m$ relatively prime to $2\cdot3\cdot 5$, we have $G(m)={\prod}_{\ell|m} \Aut(E[\ell])$.  The maximal abelian extension of $\QQ$ in $\QQ(E[m])$ is $\QQ(\zeta_m)$.

\noindent $\bullet$ For any squarefree positive integer $m$ relatively prime to $2\cdot3\cdot 5$, we claim that
\begin{equation} \label{E:Jones example galois}
G(36\cdot 5\cdot {\prod}_{\ell|m} \ell) = G(36)\times G(5) \times {\prod}_{\ell|m} \Aut(E[\ell]).
\end{equation}
Since $G(36)$ and $G(5)$ are solvable, it suffices to show that the maximal abelian extensions of $\QQ$ in $\QQ(E[36])$, $\QQ(E[5])$, and $\QQ(E[m])$ are pairwise linearly disjoint over $\QQ$ (this is clear since the intersection of any two of these fields is an unramified extension of $\QQ$).\\

Take any $t\in \{2,3,6\}$.  From the above description, we may apply Proposition~\ref{P:how to} with $M=30$ to obtain
\[
 \C_{E,t} = \dfrac{ \delta_{E,t}(36\cdot 5)}{\prod_{\ell|2\cdot3\cdot 5} (1-1/\ell)} \prod_{\ell \geq 7} \Big(1 - \frac{\ell^2-\ell-1}{(\ell-1)^3(\ell+1)}\Big).
\]
Since $G(36\cdot5)=G(36)\times G(5)$, we have
\[
 \C_{E,t} = \frac{\delta_{E,t}(36)}{(1-1/2)(1-1/3)} \frac{\delta_{E,t}(5)}{1-1/5} \prod_{\ell\geq 7} \Bigl(1 - \frac{\ell^2-\ell - 1}{(\ell-1)^3(\ell+1)}  \Bigr).
\]
We have $\delta_{E,t}(5)=\delta_{E,1}(5)$ since $t$ is relatively prime to $5$, and using our description of $G(5)$ one can show that 
$\delta_{E,t}(5)=\delta_{E,1}(5)=77/96$.  Hence
\[
 \C_{E,t} = \delta_{E,t}(36) \frac{1232}{219} \prod_{\ell} \Bigl(1 - \frac{\ell^2-\ell - 1}{(\ell-1)^3(\ell+1)}  \Bigr).
\]
Using our description of $G(36)$, one can show that
$\delta_{E,2}(36)=1/8$, $\delta_{E,3}(36)=5/27$, and $\delta_{E,6}(36)= 1/12$.  We record the resulting constants in the next lemma.

\begin{lemma} \label{L:constant claimed} For the elliptic curve $E$ over $\QQ$ defined by $y^2=x^3+9x+18$, we have
\[
\C_{E,2} = \frac{154}{219} \mathfrak{C} \quad \C_{E,3} = \frac{6160}{5913} \mathfrak{C}  \quad \C_{E,6} = \frac{308}{657} \mathfrak{C} 
\]
where $\mathfrak{C}=\prod_{\ell} \Bigl(1 - \frac{\ell^2-\ell - 1}{(\ell-1)^3(\ell+1)}  \Bigr)$. 
\end{lemma}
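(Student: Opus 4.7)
The plan is to compute each density $\delta_{E,t}(36)$ for $t\in\{2,3,6\}$ and substitute into the identity
\[
\C_{E,t} = \delta_{E,t}(36)\cdot\frac{1232}{219}\cdot\mathfrak{C}
\]
already derived in the paragraph preceding the lemma. The first move is to exploit the Chinese Remainder decomposition $\Aut(E[36])\cong\Aut(E[4])\times\Aut(E[9])$: since $\gcd(t,36)$ is supported on $\{2,3\}$, the condition $\det(I-A)\in t(\ZZ/36\ZZ)^\times$ decouples into the conditions ``$\det(I-A)\equiv 2\pmod 4$ if $2\mid t$, else $\det(I-A)\in(\ZZ/4\ZZ)^\times$'' and ``$\det(I-B)\in\{3,6\}\pmod 9$ if $3\mid t$, else $\det(I-B)\in(\ZZ/9\ZZ)^\times$''. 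Combining this with the fiber product description
\[
G(36)=\{(A,B)\in G(4)\times G(9):\varepsilon(A)=\beta(B)\}
\]
(which uses that $\chi(\det A)=\varepsilon(A)$ on $G(4)$ and $\chi(\det A)=\beta(B)$ is the coupling condition), and writing $G(4)^{\pm}_s$ and $G(9)^{\pm}_s$ for the subsets of $G(4),G(9)$ with specified character value and specified CRT-factor determinant condition, I get
\[
|G(36)\cap\Psi_t(36)|=|G(4)^{+}_{t_4}|\cdot|G(9)^{+}_{t_9}|+|G(4)^{-}_{t_4}|\cdot|G(9)^{-}_{t_9}|,
\]
and $|G(36)|=2\cdot\tfrac{|G(4)|}{2}\cdot\tfrac{|G(9)|}{2}=48\cdot 972/2=23328$.

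Next I would carry out the two local counts. For the mod-$4$ count, $|G(4)|=48$ is small enough for direct enumeration: each $A\in G(4)$ is classified by its reduction $A\bmod 2\in\Aut(E[2])\cong\mathfrak S_3$ and its lift; the two $3$-cycles have $\varepsilon=+1$ and $\det(I-A)$ odd, while the three transpositions have $\varepsilon=-1$ and $\det(I-A)$ even. Tabulating the $16$ lifts of each mod-$2$ matrix to $\GL_2(\ZZ/4\ZZ)$ with $\det A\pmod 4$ matching $\chi^{-1}(\varepsilon(A\bmod 2))$ gives $|G(4)^{\pm}_s|$ for $s\in\{1,2\}$. For the mod-$9$ count, I would parametrize $B\in G(9)$ as $\bigl(\begin{smallmatrix}a&b\\c&d\end{smallmatrix}\bigr)$ with $a,d\in(\ZZ/9\ZZ)^\times$, $b\in\ZZ/9\ZZ$, and $c\in 3\ZZ/9\ZZ$. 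The character $\beta$ records the action on $P=(-3,6i)$, which by construction is the first basis vector; complex conjugation sends $P$ to $-P$, so $\beta(B)=+1$ iff $a\equiv 1\pmod 3$. Then $\det(I-B)=(1-a)(1-d)-bc$ is analyzed by splitting on $v_3(1-a)$, $v_3(1-d)$, and $v_3(bc)\pmod 9$.

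The step I expect to be the main obstacle is the mod-$9$ case analysis, because when $v_3((1-a)(1-d))$ and $v_3(bc)$ coincide at $1$ one must track potential cancellation mod $9$ — in particular the case $a=1$ degenerates $(1-a)(1-d)$ to $0$ and throws the whole burden onto $bc$, while for $a\in\{4,7\}$ one needs the precise residue of $(1-a)(1-d)-bc\pmod 9$, not just its $3$-adic valuation. Organizing the enumeration by the pair $(a\bmod 9,d\bmod 3)$ keeps it manageable, and the claimed values $\delta_{E,2}(36)=1/8$, $\delta_{E,3}(36)=5/27$, $\delta_{E,6}(36)=1/12$ should fall out after dividing by $23328$. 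Plugging these into the displayed formula for $\C_{E,t}$ yields $154\mathfrak{C}/219$, $6160\mathfrak{C}/5913$, and $308\mathfrak{C}/657$ respectively, which is the statement of the lemma; as a sanity check one can verify $\C_{E,2}\cdot\tfrac{40}{77}=\C_{E,3}\cdot\tfrac{9}{20}$ via $\delta_{E,2}(36)/\delta_{E,3}(36)=27/40$.
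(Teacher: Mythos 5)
Your setup is essentially the paper's: the paper also reduces the lemma to the three densities $\delta_{E,t}(36)$ via the formula $\C_{E,t}=\delta_{E,t}(36)\cdot\tfrac{1232}{219}\,\mathfrak{C}$ and then simply asserts their values (they were verified by machine enumeration over the $23328$-element group $G(36)$), so your fiber-product/CRT framework is the right way to make that step explicit, and it is correct. Two observations would make your plan actually close: first, each of your two-term sums collapses to a single product, because no $B\in G(9)$ with $\beta(B)=+1$ has $\det(I-B)$ invertible mod $3$ (then the $(1,1)$-entry is $\equiv 1\bmod 3$ and $bc\equiv 0\bmod 3$, so $\det(I-B)\equiv 0\bmod 3$), and no $A\in G(4)$ with $\varepsilon(A)=+1$ has $\det(I-A)\equiv 2\bmod 4$ (lifts of $3$-cycles give odd $\det(I-A)$, while lifts of the identity give $\det(I-A)\equiv 0\bmod 4$). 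Second, the surviving local counts are: in $G(4)$, $16$ elements with $\varepsilon=+1$ and $\det(I-A)$ odd and $12$ with $\varepsilon=-1$ and $\det(I-A)\equiv 2\bmod 4$; in $G(9)$, $243$ elements with $\beta=-1$ and $\det(I-B)$ a unit, $270$ with $\beta=+1$ and $\det(I-B)\in\{3,6\}$, and $162$ with $\beta=-1$ and $\det(I-B)\in\{3,6\}$. These give $12\cdot 243=2916$, $16\cdot 270=4320$, $12\cdot 162=1944$, hence $1/8$, $5/27$, $1/12$ as claimed; you still owe the reader these enumerations, since your text only describes how to organize them. Finally, your closing sanity check is mis-stated: the correct consistency relations are $\C_{E,6}=\tfrac{2}{3}\C_{E,2}=\tfrac{9}{20}\C_{E,3}$ (from $\delta_{E,6}/\delta_{E,2}=2/3$ and $\delta_{E,6}/\delta_{E,3}=9/20$), whereas $\C_{E,2}\cdot\tfrac{40}{77}=\tfrac{80}{219}\mathfrak{C}$ is not equal to $\C_{E,3}\cdot\tfrac{9}{20}=\tfrac{308}{657}\mathfrak{C}$.
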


In the following table, the ``expected number'' of $p\leq x$ with $p\nmid 6$ such that $|E(\FF_p)|/t$ is prime is
\begin{equation} \label{E:prediction}
\C_{E,t}\int_{t+1}^x \frac{1}{\log(u+1) }\frac{du}{\log u}
\end{equation}
rounded to the nearest integer, where $\C_{E,t}$ is estimated using Lemma~\ref{L:constant claimed} and (\ref{E:numerical constant}).

\begin{table}[H]
\caption{Number of $p\leq x$ with $p\nmid 6$ such that $|E(\FF_p)|/t$ is prime.}
\begin{tabular}{l|cc|cc|cc} 
      
     &  $t=2$ & & $t=3$& & $t=6$\\     
 $x$ & Actual & Expected & Actual & Expected & Actual & Expected\\ 
\hline \hline
40000000    &55118  &55244  &83736   &84036   &39554  &39634 \\
80000000    &101556 &101444 &154113  &154134  &72535  &72537 \\
120000000   &145334 &144995 &220046  &220165  &103413 &103490 \\
160000000   &187516 &186949 &283458  &283747  &133307 &133271 \\
200000000   &228440 &227774 &345198  &345597  &161983 &162224 \\
240000000   &268461 &267730 &405675  &406118  &190166 &190543 \\
280000000   &307911 &306986 &464711  &465565  &217926 &218348 \\
320000000   &346499 &345657 &523022  &524117  &245405 &245727 \\
360000000   &384950 &383827 &580584  &581901  &272350 &272739 \\
400000000   &422640 &421560 &637825  &639017  &299112 &299433 \\
440000000   &459555 &458907 &694394  &695541  &325385 &325845 \\
480000000   &496734 &495907 &750663  &751535  &351567 &352005 \\
520000000   &533405 &532594 &806485  &807050  &377507 &377936 \\
560000000   &570295 &568996 &861533  &862129  &403533 &403659 \\
600000000   &606622 &605135 &916370  &916807  &428958 &429192 \\
640000000   &642830 &641032 &970514  &971114  &454130 &454548 \\
680000000   &678475 &676705 &1024511 &1025079 &479230 &479741 \\
720000000   &713909 &712169 &1077829 &1078722 &504194 &504782 \\
760000000   &749026 &747436 &1130770 &1132066 &529125 &529680 \\
800000000   &784432 &782518 &1183934 &1185128 &553804 &554443 \\
840000000   &819581 &817427 &1236561 &1237925 &578378 &579081 \\
880000000   &854213 &852172 &1288783 &1290470 &603045 &603599 \\
920000000   &888701 &886761 &1341501 &1342777 &627523 &628004 \\
960000000   &923138 &921202 &1393453 &1394859 &651810 &652301 \\
1000000000  &957322 &955502 &1445188 &1446724 &675851 &676497 \\
\hline
\end{tabular}
\end{table}

\section{CM example: $y^2=x^3-x$} \label{S:CM example}

Let $E$ be the elliptic curve over $\QQ$ defined by the Weierstrass equation $y^2=x^3-x$.   This curve has complex multiplication by $R=\ZZ[i]$, where $i$ corresponds to the endomorphism $(x,y)\mapsto (-x, iy)$ defined over $\QQ(i)$.  The curve $E$ has conductor $2^5$.

The torsion group $E(\QQ(i))_{\tors}$ has order $8$ and is generated by $(i,1-i)$ and $(1,0)$.  So for those primes $p$ that split in $\QQ(i)$ (i.e., $p\equiv 1 \bmod{4}$), we find that $|E(\FF_p)|$ is divisible by $8$.  In this section, we give numerical evidence for Conjecture~\ref{C:main} with $t=8$.  We will study it in the form given in (\ref{E:CM split}), which conjectures that
\begin{equation} \label{E:CM working conjecture}
 |\{ p \leq x : p \equiv 1 \bmod{4}, \, |E(\FF_p)|/8 \text{ is prime}\}| \sim \frac{\C_{E_{\QQ(i)},\, 8}}{2} \int^x_9 \frac{1}{\log(u+1)-\log 8}\, \frac{du}{\log u}
 \end{equation}
as $x\to \infty$ (we have used the integral version of the conjecture since it should give a better approximation).  This particular curve was studied by Iwaniec and Jim\'enez Urroz in \cite{Iwaniec} where they proved that
\[
|\{ p \leq x : p \equiv 1 \bmod{4}, \, |E(\FF_p)|/8  \text{ is a prime or a product of two primes}\}| \gg \frac{x}{(\log x)^2}
\]
using sieve theoretic methods.
We now describe the constant $\C_{E_{\QQ(i)},8}$:

\begin{lemma} \label{L:CM example constant}
Let $E$ be the elliptic curve over $\QQ$ given by $y^2=x^3-x$.  Then
\[
\C_{E_{\QQ(i)},8} =  \prod_{\ell\neq 2} \Bigl( 1 - \chi(\ell) \frac{\ell^2-\ell-1}{(\ell-\chi(\ell))(\ell-1)^2} \Bigr) 
\]
where $\chi(\ell)=(-1)^{(\ell-1)/2}$.  We have $\C_{E_{\QQ(i)},8}  \approx 1.067350894$.
\end{lemma}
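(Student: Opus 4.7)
The plan is to apply Proposition~\ref{P:CM how to} to $E$ viewed over $K = F = \QQ(i)$, so that $R = \ZZ[i]$. The discriminant of $F$ is $-4$, the order $R$ has conductor $1$, and the Kronecker character satisfies $\chi(\ell) = (-1)^{(\ell-1)/2}$ for odd $\ell$. The only prime ramifying in $F/\QQ$ is $2$, $E/\QQ(i)$ has good reduction outside the unique prime $(1+i)$ above $2$, and the Hecke character attached to $E/\QQ(i)$ has conductor a power of $(1+i)$; hence the integer $M$ of Proposition~\ref{P:CMopenimage} can be chosen to be a power of $2$. With any such $M$, the only prime dividing $tM$ is $2$, whence $t\prod_{\ell\mid tM}\ell = 16$, and the Euler product over $\ell \nmid tM$ in Proposition~\ref{P:CM how to} is exactly the product over odd primes appearing in the lemma. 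Proving the lemma therefore reduces to the identity $\delta_{E,8}(16) = \tfrac{1}{2}$, so that the prefactor $\delta_{E,8}(16)/(1-\tfrac{1}{2})$ equals $1$.

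To compute $\delta_{E,8}(16)$, I would exploit the identification $\ZZ[i]/16\ZZ[i] = \ZZ[i]/(1+i)^8$ (valid because $(1+i)^8 = 16$). The module $E[16]$ is free of rank one over this local ring, so $G(16) \subseteq \bigl(\ZZ[i]/(1+i)^8\bigr)^\times$. Since the endomorphism $[i]$ is defined over $\QQ(i)$, the group $E(\QQ(i))_{\tors} \cong \ZZ/2\ZZ \times \ZZ/4\ZZ$ is a $\ZZ[i]$-stable subgroup of order $8$; as the unique $\ZZ[i]$-submodule of $E[\infty]$ of order $8$ is $E[(1+i)^3]$, the two coincide. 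Galois therefore fixes $E[(1+i)^3]$ pointwise, forcing
\[
G(16) \subseteq U := 1 + (1+i)^3\bigl(\ZZ[i]/(1+i)^8\bigr),
\]
a subgroup of order $2^5 = 32$. The reverse inclusion $G(16) = U$ is equivalent to the Hecke character $\psi_E$ of $E/\QQ(i)$ having conductor exactly $(1+i)^3$, which follows from the standard conductor-discriminant relation $N_E = |D_F|\cdot N_{F/\QQ}(\fF_{\psi_E})$: with $N_E = 32$ and $|D_F|=4$ one reads off $\fF_{\psi_E} = (1+i)^3$, and the explicit description of $\widehat\rho$ in the proof of Proposition~\ref{P:CMopenimage} then identifies $G(16)$ with all of $U$.

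With $G(16) = U$ in hand, the density is immediate. For $g \in U$, write $g = 1 + (1+i)^3 h$ with $h \in \ZZ[i]/(1+i)^5$. As in the proof of Proposition~\ref{P:CM how to}, $\det(I-g) = N(1-g) = N\bigl((1+i)^3 h\bigr) = 8\,N(h) \pmod{16}$. Hence $\det(I-g) \in 8(\ZZ/16\ZZ)^\times$ iff $N(h)$ is odd iff $h$ is a unit in the local ring $\ZZ[i]/(1+i)^5$. Exactly half of the $2^5$ elements of that ring are units (the remainder forming the maximal ideal), so $|U \cap \Psi_8(16)|/|U| = 1/2$, giving $\delta_{E,8}(16) = 1/2$ and closing the argument.

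The main obstacle is the equality $G(16) = U$: the containment $G(16) \subseteq U$ is automatic from the torsion structure of $E(\QQ(i))$, but the reverse inclusion requires knowing the precise conductor of $\psi_E$ (or, equivalently, an explicit determination of $\QQ(i)(E[16])$ via division polynomials). Everything afterwards is a routine calculation inside the local ring $\ZZ[i]/(1+i)^8$.
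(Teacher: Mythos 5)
Your proof is correct and takes essentially the same route as the paper: apply Proposition~\ref{P:CM how to} with $M$ a power of $2$, reduce everything to $\delta_{E_{\QQ(i)},8}(16)=\tfrac{1}{2}$, and compute that density inside $(1+\p_2^3)/(1+\p_2^8)$ using $\det(I-a)=N(1-a)$. The single point of divergence is the identification $G(16)=(1+\p_2^3)/(1+\p_2^8)$, which the paper simply quotes from Katz--Sarnak, whereas you derive the inclusion $\subseteq$ from the $\ZZ[i]$-stable $8$-element group $E(\QQ(i))_{\tors}=E[(1+i)^3]$ and the inclusion $\supseteq$ from the conductor--discriminant relation $N_E=|D_F|\,N_{F/\QQ}(\fF_{\psi_E})$ giving $\fF_{\psi_E}=(1+i)^3$; this is a legitimate self-contained substitute for the citation (you leave the numerical value $\approx 1.067350894$ unaddressed, which the paper obtains by regrouping the product against $L(1,\chi)=\pi/4$, but that is a cosmetic omission).
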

\begin{proof}
Since $E$ has conductor $2^5$, the curve $E_{\QQ(i)}$ has good reduction away from the prime $(1+i)$.   For the curve $E_{\QQ(i)}$, fix notation as in the proof of Proposition~\ref{P:CMopenimage} (in particular, $K=F=\QQ(i)$ and $R=\ZZ[i]$).  Checking the two conditions in the second half of the proof of Proposition~\ref{P:CMopenimage}, we find that the Proposition holds for $E_{\QQ(i)}$ with $M=2$.  

The discriminant of $\QQ(i)$ is $-4$ and the conductor of the order $R$ is $1$, so by Proposition~\ref{P:CM how to} we have
\[
 \C_{E_{\QQ(i)},8} = \dfrac{ \delta_{E_{\QQ(i)},8}(16) }{ (1-1/2)}  \prod_{\ell \neq 2} \Big(1 - \chi(\ell) \frac{\ell^2-\ell-1}{(\ell-\chi(\ell))(\ell-1)^2}\Big)
\]
where $\chi$ is the Kronecker character of $\QQ(i)$ (and hence $\chi(\ell)=(-1)^{(\ell-1)/2}$).  To prove the required product description of $\C_{E_{\QQ(i)},8}$, it remains to show that $\delta_{E_{\QQ(i)},8}(16)=1/2$.  Consider the representation
\[
\widehat{\rho}_2\colon \Gal(\bbar{\QQ(i)}/\QQ(i)) \to (R\otimes\ZZ_2)^\times = (\ZZ_2[i])^\times
\]
arising from the Galois action on the Tate module $T_2(E)$.  It is well known that $\widehat{\rho}_2$ has image equal to $1+\p_2^3$ where $\p_2$ is the prime ideal $(1+i)\ZZ_2[i]$ (for example, see \cite{KatzSarnak}*{9.4}).  In particular, 
\begin{align*}
G(16)=\rho_{16}\big(\Gal(\bbar{\QQ(i)}/\QQ(i))\big) &= \big(1+\p_2^3\big)/\big(1+16\ZZ_2[i]\big)
 = \big(1+\p_2^3\big)/\big(1+\p_2^{8}\big).
\end{align*}
Under our identification of $\Aut_{\ZZ_2[i]}(T_2(E))$ with $\ZZ_2[i]^\times$, we find that $\det(I-a)$ agrees with $N(1-a)$  where $N$ is the norm map from $ \ZZ_2[i]$ to $\ZZ_2$.   We deduce that $\delta_{E_{\QQ(i)},8}(16)$ is the proportion of $a\in \big(1+\p_2^3\big)/\big(1+\p_2^{8}\big)$ for which $N(1-a) \equiv 8 \bmod{16}$; this is indeed equal to $1/2$.

With respect to how one estimates the constant, we simply note that
\[
\C_{E,8}=L(1,\chi)^{-1}\prod_{\ell\neq 2} \Big( 1 - \chi(\ell) \frac{\ell^2-\ell-1}{(\ell-\chi(\ell))(\ell-1)^2}\Big)\Big(1-\frac{\chi(\ell)}{\ell}\Big)^{-1}.
\] 
The product is now absolutely convergent and $L(1,\chi)=\pi/4$ by the class number formula.
\end{proof}

In the following table, the ``Actual'' column is the value of the left hand side of (\ref{E:CM working conjecture}), while the ``Expected'' column is the right hand side of (\ref{E:CM working conjecture}) with the approximation from Lemma~\ref{L:CM example constant}.

\begin{table}[H]
\caption{Number of $p\leq x$ with $p\equiv 1 \bmod{4}$ such that $|E(\FF_p)|/8$ is prime.}
\begin{tabular}{lcc | lcc} \label{T:CM}
	$x$
        & Actual
        & Expected
        & $x$
        & Actual
        & Expected
        \\ \hline \hline
20000000 &  49847 & 50063 &520000000 &  865909 & 866300   \\
40000000 &  91074 & 91134 &540000000 &  895323 & 895804   \\ 
60000000 &  129660 & 129648 &560000000 &  924773 & 925193    \\
80000000 &  166429 & 166631 &580000000 &  954215 & 954472   \\
100000000 &  202316 & 202534 &600000000 &  983415 & 983645   \\
120000000 &  237402 & 237612 &620000000 &  1012618 & 1012717   \\
140000000 &  271865 & 272024 &640000000 &  1041478 & 1041691   \\
160000000 &  305749 & 305882 &660000000 &  1070519 & 1070571   \\
180000000 &  338987 & 339266 &680000000 &  1099310 & 1099359  \\
200000000 &  372142 & 372237 &700000000 &  1127947 & 1128060   \\
220000000 &  404768 & 404844 &720000000 &  1156596 & 1156676  \\
240000000 &  437027 & 437124 &740000000 &  1185077 & 1185209 \\
260000000 &  469002 & 469110 &760000000 &  1213434 & 1213663  \\
280000000 &  500848 & 500827 &780000000 &  1241996 & 1242040  \\
300000000 &  532345 & 532298 &800000000 &  1270215 & 1270341  \\
320000000 &  563613 & 563542 &820000000 &  1298419 & 1298570  \\
340000000 &  594570 & 594575 &840000000 &  1326489 & 1326728  \\
360000000 &  625409 & 625412 &860000000 &  1354726 & 1354817  \\
380000000 &  656138 & 656065 &880000000 &  1382946 & 1382839  \\
400000000 &  686710 & 686546 &900000000 &  1410787 & 1410796  \\
420000000 &  716542 & 716864 &920000000 &  1438522 & 1438689  \\
440000000 &  746751 & 747028 &940000000 &  1466143 & 1466520  \\
460000000 &  776709 & 777047 &960000000 &  1493786 & 1494291  \\
480000000 &  806405 & 806928 &980000000 &  1521276 & 1522003  \\
500000000 &  836080 & 836677 &1000000000 &  1548766 & 1549657 
\end{tabular}
\end{table}

\section{Example: $X_0(11)$} \label{S:X0(11)}

In this section we consider the elliptic curve $E=X_0(11)$ defined over $\QQ$.  The modular interpretation of $X_0(11)$ is not important for our purposes; it suffices to know that $y^2+y = x^3-x^2-10x-20$ is a minimal Weierstrass model for $E/\QQ$.  
The curve $E$ has conductor $11$ and hence has good reduction away from $11$.
By Theorem \ref{T:Katz theorem}, $t_{E}$ divides $|E(\FF_p)|$ for each prime $p \nmid 2\cdot 11$; since $|E(\FF_3)|=5$, we deduce that $t_E$ divides $5$.  The rational point $(x,y)=(5,5)$ of $E$ has order $5$, and thus $5$ divides $t_{E}$.   We deduce that $t_E=5$ and in particular that $E(\QQ)_{\tors}$ is generated by $(5,5)$.  In this section we shall test Conjecture~\ref{C:main} with $t=t_E=5$.\\

  Lang and Trotter have worked out the Galois theory for this elliptic curve, and in particular have shown that Theorem~\ref{T:openimage} holds with $M=2\cdot 5\cdot 11$ (see \cite{Lang-Trotter}*{Part I, \S8} for full details). By Proposition~\ref{P:how to}, we have
\begin{align} \label{E:XO11 start}
\C_{E,5} &= \frac{\delta_{E,5}(2\cdot 5^2\cdot 11)}{\prod_{\ell|2\cdot5\cdot11}(1-1/\ell)} \prod_{\ell\nmid 2\cdot 5\cdot 11} \Big(1 - \frac{\ell^2-\ell-1}{(\ell-1)^3(\ell+1)}\Big)\\
& = \delta_{E,5}(2\cdot 5^2\cdot 11) \frac{345600}{78913} \prod_{\ell} \Big(1 - \frac{\ell^2-\ell-1}{(\ell-1)^3(\ell+1)}\Big). \notag
\end{align}
We shall now describe the structure of the group $G(2\cdot 5^2\cdot 11)$ and then compute $\delta_{E,5}(2\cdot 5^2\cdot 11)$.  Those not interested in this computation can skip ahead to the data.\\
 
For all $\ell \neq 5$, we have $G(\ell) = \Aut( E[\ell] )$.  There is a basis of $E[5^2]$ over $\ZZ/25\ZZ$ for which $G(5^2)$ becomes the group 
\[
\left\{ \left(\begin{matrix} 1+5a & 5b \\ 5c & u\end{matrix}\right): a,b,c\in \ZZ/25\ZZ, u \in (\ZZ/25\ZZ)^\times \right\}.
\]
To ease computation, identify $G(5^2)$ with this matrix group.  Fixing a basis, we can also identify $G(2)$ and $G(11)$ with the full groups $\GL(\ZZ/2\ZZ)$ and $\GL(\ZZ/11\ZZ)$ respectively.

Let $\varepsilon \colon G(2) \to \{\pm 1\}$ be the signature map (i.e, compose any isomorphism $G(2)\cong \mathfrak{S}_3$ with the usual signature), and define the homomorphisms 
\[
\phi_{11}\colon G(11)  \twoheadrightarrow \FF_{11}^\times/\{\pm 1\}, \quad
A \mapsto \pm\det(A)
\]
and
\[
\alpha \colon G(5^2) \twoheadrightarrow \ZZ/5\ZZ, \quad
\left(\begin{smallmatrix}1+5a & 5b \\ 5c & u\end{smallmatrix}\right)  \mapsto a \mod{5}.
\]
The group $\FF_{11}^\times/\{\pm 1\}$ is cyclic of order $5$ with generator $\pm2$, so it makes sense to define a homomorphism $\phi_5\colon G(5^2) \to \FF_{11}^\times/\{\pm 1\}$ by
\[
\phi_5(A) = (\pm 2)^{\alpha(A)}.
\]
We have a natural inclusion $G(2\cdot 5^2 \cdot 11) \subseteq G(2)\times G(5^2) \times G(11)$, which gives us the following description of $G(2\cdot 5^2\cdot 11)$:
\[
G(2\cdot 5^2 \cdot 11) = \left\{ (A_2,A_5,A_{11}) \in G(2)\times G(5^2) \times G(11): \phi_5(A_5)=\phi_{11}(A_{11}), \big(\tfrac{\det(A_{11})}{11}\big)=\varepsilon(A_2) \right\}.\] 
 
 \begin{lemma}
$|G(2\cdot 5^2 \cdot 11)|= 19800000.$
 \end{lemma}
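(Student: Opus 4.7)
The plan is to count $|G(2\cdot 5^2\cdot 11)|$ directly from the fiber-product description given just above the lemma. First I would compute the orders of the three local factors. One has $|G(2)| = |\GL_2(\FF_2)| = 6$ and $|G(11)| = |\GL_2(\FF_{11})| = (11^2-1)(11^2-11) = 13200$. For $|G(5^2)|$, the explicit parametrization shows that $1+5a$, $5b$, and $5c$ depend only on the residues of $a,b,c$ modulo $5$ (five choices apiece), while $u$ ranges over the twenty elements of $(\ZZ/25\ZZ)^\times$. Every such matrix is automatically invertible mod $25$: its determinant reduces to $u \bmod 5 \in \FF_5^\times$. Hence $|G(5^2)| = 5^3 \cdot 20 = 2500$.

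Next I would check that each of the homomorphisms defining the two gluing conditions is surjective onto its target. The map $\phi_{11} \colon G(11) \to \FF_{11}^\times/\{\pm 1\}$ factors through $\det$, hence is surjective onto the cyclic group of order $5$; the map $\phi_5 = (\pm 2)^{\alpha(\cdot)}$ on $G(5^2)$ is surjective because $\alpha$ clearly hits all of $\ZZ/5\ZZ$ and $\pm 2$ generates $\FF_{11}^\times/\{\pm 1\}$. Likewise the Legendre character $\bigl(\tfrac{\det(\cdot)}{11}\bigr)$ and the signature $\varepsilon$ surject onto $\{\pm 1\}$.

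The final step is an independence argument to assemble the count. Fix $A_{11}\in G(11)$. The condition $\phi_5(A_5) = \phi_{11}(A_{11})$ restricts $A_5$ to a coset of $\ker\phi_5$ in $G(5^2)$ of size $|G(5^2)|/5 = 500$, while the condition $\varepsilon(A_2) = \bigl(\tfrac{\det A_{11}}{11}\bigr)$ independently restricts $A_2$ to a coset of $\ker\varepsilon$ in $G(2)$ of size $|G(2)|/2 = 3$. Summing over the $13200$ choices of $A_{11}$ gives
\[
|G(2\cdot 5^2\cdot 11)| \;=\; 13200 \cdot 500 \cdot 3 \;=\; 19{,}800{,}000.
\]

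The only mildly subtle point is the count $|G(5^2)|=2500$: one must recognize the redundancy modulo $5$ in the parameters $a,b,c$, rather than counting $25$ values each. Once that is handled, the rest is a routine application of the fact that a fiber product over surjections $X\twoheadrightarrow Z$ and $Y\twoheadrightarrow Z$ has cardinality $|X||Y|/|Z|$, applied twice.
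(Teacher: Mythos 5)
Your proposal is correct and follows essentially the same route as the paper: both compute $|G(2)|=6$, $|G(5^2)|=5^3\cdot 20$, $|G(11)|=(11^2-1)(11^2-11)$, and then exploit the surjectivity of $\varepsilon$, $\phi_5$, and $\phi_{11}$ to count the fiber product as $|G(2)|\,|G(5^2)|\,|G(11)|/(2\cdot 5)$. Your observation about the redundancy of $a,b,c$ modulo $5$ in the parametrization of $G(5^2)$ is exactly the point implicit in the paper's count $5^3\cdot 20$.
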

 \begin{proof}
 We first use the fact that $\varepsilon$ surjects onto $\{\pm 1\}$, and $|G(2)|=6$.
\begin{align*} 
 |G(2\cdot 5^2 \cdot 11)|
 &= |\{ (A_2,A_5,A_{11}) \in G(2)\times G(5^2) \times G(11): \phi(A_5)=\phi(A_{11}), \big(\tfrac{\det(A_{11})}{11}\big)=\varepsilon(A_2) \}|\\
  &= 3\cdot |\{ (A_5,A_{11}) \in  G(5^2) \times G(11): \phi(A_5)=\phi(A_{11}), \big(\tfrac{\det(A_{11})}{11}\big)= 1 \}| \\
  & +   3\cdot |\{ (A_5,A_{11}) \in  G(5^2) \times G(11): \phi(A_5)=\phi(A_{11}), \big(\tfrac{\det(A_{11})}{11}\big)= -1 \}| \\
  &= 3\cdot |\{ (A_5,A_{11}) \in  G(5^2) \times G(11): \phi(A_5)=\phi(A_{11}) \}|
  \end{align*}
  We now use that $\phi_5$ and $\phi_{11}$ surject onto a common group of order 5.
\begin{align*} 
 |G(2\cdot 5^2 \cdot 11)| &= 3\cdot |\{ (A_5,A_{11}) \in  G(5^2) \times G(11): \phi(A_5)=\phi(A_{11}) \}| \\
  &= 3 |G(5^2)| |G(11)| /5 
  = 3  (5^3\cdot 20)  (11^2-1)(11^2-11) /5 =  19800000  \qedhere
\end{align*} 
 \end{proof}
 
 \begin{lemma}\label{L:X011 almost there}
 $\delta_{E,5}(2\cdot 5^2\cdot 11)= 9/50.$
 \end{lemma}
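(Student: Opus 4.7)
The plan is to use the Chinese Remainder Theorem to decompose the condition $\det(I-A) \in 5\cdot (\ZZ/550\ZZ)^\times$ into three independent local conditions on the factors $A_2$, $A_5$, $A_{11}$, then compute each local set and assemble using the two linking constraints defining $G(2\cdot 5^2\cdot 11)$ as a subgroup of $G(2)\times G(5^2)\times G(11)$. By CRT, a matrix $A \in G(550)$ lies in $\Psi_5(550)$ precisely when (i) $\det(I-A_2) = 1$ in $\FF_2$, (ii) $\det(I-A_5)$ is divisible by $5$ but not by $25$, and (iii) $\det(I-A_{11}) \ne 0$ in $\FF_{11}$.

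Next I analyze each local set. For (i), $\GL(\FF_2) \cong \mathfrak{S}_3$ has exactly two order-$3$ elements satisfying the condition, and both have signature $\varepsilon = +1$; call this set $X_2$. For (ii), using the explicit parametrization of $G(5^2)$ given above, a direct calculation yields
\[
\det(I - A_5) = 5a(u-1) - 25bc \equiv 5a(u-1) \pmod{25},
\]
so (ii) becomes $a \not\equiv 0$ and $u \not\equiv 1 \pmod 5$. Since only the residues of $a,b,c$ mod $5$ enter the matrix, counting yields $|X_5| = 4\cdot 25 \cdot 15 = 1500$, and $\alpha(A_5) = a \bmod 5$ takes each nonzero value with equal multiplicity so that $\phi_5(A_5) = (\pm 2)^{\alpha(A_5)}$ hits each of the four non-identity cosets of $\FF_{11}^\times/\{\pm 1\}$ exactly $375$ times (and hits the identity coset $0$ times). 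For (iii), stratifying by $\det(A_{11}) = a_0 \in \FF_{11}^\times$, there are $|\GL(\FF_{11})|/10 = 1320$ matrices of each fixed determinant; Lemma~\ref{L:eigenvalues} shows that exactly $132$ (if $a_0 \ne 1$) or $121$ (if $a_0 = 1$) of them have $\det(I-A_{11}) = 0$, leaving $1188$ or $1199$ admissible matrices, respectively.

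Finally, I combine using the linking conditions $\phi_5(A_5) = \phi_{11}(A_{11})$ and $\varepsilon(A_2) = \bigl(\tfrac{\det A_{11}}{11}\bigr)$. Since $\varepsilon(A_2) = +1$ on all of $X_2$, the second linking condition forces $\det(A_{11})$ to be a quadratic residue mod $11$. The key observation is that $11 \equiv 3 \pmod 4$, so $-1$ is a non-residue, and hence every coset of $\{\pm 1\}$ in $\FF_{11}^\times$ contains exactly one quadratic residue. The identity coset is automatically excluded from the sum because $X_5 \cap \phi_5^{-1}(1) = \emptyset$, and in each of the four remaining cosets the unique residue differs from $1$, contributing $1188$ admissible $A_{11}$'s. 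Assembling,
\[
|G(550) \cap \Psi_5(550)| = 2 \cdot 4 \cdot 375 \cdot 1188 = 3\,564\,000,
\]
and dividing by $|G(550)| = 19\,800\,000$ from the previous lemma gives $9/50$.

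The main subtlety is keeping the two linking conditions straight while stratifying both $A_5$ and $A_{11}$ by their images in $\FF_{11}^\times/\{\pm 1\}$; the count collapses cleanly precisely because $-1 \notin (\FF_{11}^\times)^2$ makes the quadratic-residue fibre over each coset a singleton, letting the global count factor into independent local contributions.
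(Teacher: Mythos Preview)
Your proof is correct and follows essentially the same approach as the paper's own argument: both decompose via CRT into local factors at $2$, $5^2$, and $11$, identify the two admissible elements of $G(2)$ (both with $\varepsilon=+1$), count $375$ elements of $\B(5^2)$ over each nonzero value of $\alpha$, and then use the fact that $-1\notin(\FF_{11}^\times)^2$ together with Lemma~\ref{L:eigenvalues} to get $1188$ admissible $A_{11}$'s over each of the four relevant cosets, arriving at $2\cdot 4\cdot 375\cdot 1188 = 3\,564\,000$ and hence $9/50$. The only differences are organizational (you compute $|X_5|=1500$ first and stratify $A_{11}$ by determinant value, while the paper stratifies by the coset $x$ throughout), not substantive.
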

 \begin{proof}
 To ease notation, define $\B(m):=G(m)\cap \Psi_t(m)$.  First note that an element $A\in \GL(\ZZ/2\ZZ)=G(2)$ is in $\B(2)$ if and only if $\det(I-A)=\det(A) = 1$.  One quickly verifies that 
 $\B(2) = \left\{\left(\begin{smallmatrix} 1 & 1 \\1 & 0\end{smallmatrix}\right),\left(\begin{smallmatrix} 0 & 1 \\1 & 1\end{smallmatrix}\right)\right\}.$   
 These two elements have order three, so $\varepsilon(A)=1$ for all $A\in \B(2)$.
\begin{align*} 
 |\B(2\cdot 5^2 \cdot 11)| 
 &= | \{ (A_2,A_5,A_{11})\in \B(2)\times \B(5^2)\times \B(11): \phi(A_5)=\phi(A_{11}), \big(\tfrac{\det(A_{11})}{11}\big)=\varepsilon(A_2) \}| \\
 &=  2\cdot | \{ (A_5,A_{11})\in \B(5^2)\times \B(11): \phi(A_5)=\phi(A_{11}), \det(A_{11}) \in (\FF^\times_{11})^2 \}|\\
 &= 2\sum_{ x \in \FF_{11}^\times/\{\pm 1\}} | \{A\in \B(5^2): \phi_5(A) = x \}| \cdot |\{A\in \B(11): \phi_{11}(A)=x, \det(A)\in (\FF_{11}^\times)^2 \}|
\end{align*}

 Take any $A=\left(\begin{smallmatrix}1+5a & 5b \\5c & u\end{smallmatrix}\right)\in G(5)$.  We have $ \det(I-A) = 5a(u-1) \in t_E (\ZZ/25\ZZ)^\times= 5(\ZZ/25\ZZ)^\times$ if and only if $a\not \equiv 0 \pmod{5}$ and $u \not \equiv 1 \pmod{5}$.  
Given $a\in\ZZ/5\ZZ$, we find that
\[
|\{ A \in \B(5^2): \alpha(A)=a \}| = \begin{cases}
    5^2\cdot15=375  & \text{ if }a\not \equiv 0 \pmod{5} \\
     0 & \text{ if }a\equiv 0 \pmod{5},
\end{cases}
\]
and hence for $b\in \FF^\times_{11}$,
\begin{align*} 
|\{A\in \B(5^2): \phi_{5}(A)=\pm b \} |
 &= \begin{cases}
     375  & \text{ if }b\neq \pm 1 \\
     0 & \text{ if } b=\pm 1.
\end{cases}
\end{align*}
Our expression for $ |\B(2\cdot 5^2 \cdot 11)| $ thus simplifies to the following,
\[ 
|\B(2\cdot 5^2 \cdot 11)| 
=750 \sum_{ x \in \FF_{11}^\times/\{\pm 1\} -\{ \{\pm 1\} \}} |\{A\in \B(11): \phi_{11}(A)=x, \;  \det(A)\in (\FF_{11}^\times)^2 \}| .
\]
Take any $x\in \FF_{11}^\times/\{\pm 1 \}$.  Since $-1$ is not a square in $\FF_{11}^\times$, the class $x$ contains a unique element $b_x\in (\FF_{11}^\times)^2$.
\begin{align*} 
& | \{ A \in \B(11): \phi_{11}(A)=x,\; \det(A)\in (\FF_{11}^\times)^2 \}| = | \{ A \in \B(11): \det(A)= b_x \}|.
\end{align*}
So our expression for $ |\B(2\cdot 5^2 \cdot 11)| $ simplifies further to
\[ 
|\B(2\cdot 5^2 \cdot 11)| 
=750 \sum_{ b\in (\FF_{11}^\times)^2 - \{1\} } |\{A\in \GL(\FF_{11}): \det(A)=b,\; \det(I-A) \neq 0\}|.
\]
Using Lemma~\ref{L:eigenvalues}, we obtain
\begin{align*} 
|\B(2\cdot 5^2 \cdot 11)| &= 
750 \sum_{ b\in (\FF_{11}^\times)^2 - \{1\} } \Bigl( |\GL(\FF_{11})|/10 -  (11^2+11) \Bigr)\\
&= 750 \cdot 4 \cdot ((11^2-1)(11^2-11)/10 - (11^2+11)) = 3564000. 
\end{align*} 
Therefore using the previous lemma, we have
\[
\delta_{E,5}(2\cdot 5^2\cdot 11) = |\B(2\cdot 5^2 \cdot 11)|/|G(2\cdot 5^2 \cdot 11)|=3564000/19800000=9/50.			\qedhere
\]
\end{proof}

We finally describe the constant $\C_{X_0(11),5}$ from Conjecture~\ref{C:main}.  Lemma~\ref{L:X011 almost there} and (\ref{E:XO11 start}) imply that
\begin{equation} \label{E:X011 constant}
\C_{X_0(11),5} = \frac{62208}{78913}  \prod_{\ell} \Bigl(1 - \frac{\ell^2-\ell - 1}{(\ell-1)^3(\ell+1)}  \Bigr).  
\end{equation}

In the following table, the ``expected number'' of $p\leq x$ with $p\neq 11$ such that $|X_0(11)(\FF_p)|/5$ is prime is
\begin{equation} \label{E:X011 prediction}
\C_{X_0(11),5}\int_{6}^x \frac{1}{\log(u+1)-\log 5 }\frac{du}{\log u}
\end{equation}
rounded to the nearest integer, where $\C_{X_0(11),5}$ is estimated using (\ref{E:X011 constant}) and (\ref{E:numerical constant}).

\begin{table}[H]
\caption{Number of $p\leq x$ with $p\neq 11$ such that $|X_0(11)(\FF_p)|/5$ is prime.}
\begin{tabular}{lcc | lcc} \label{T:X0(11)}
	$x$
        & Actual
        & Expected
        & $x$
        & Actual
        & Expected
        \\ \hline \hline
20000000    &36051 & 36091 & 520000000 &629151& 628797 \\
40000000    &66143 & 65814 & 540000000 &650676& 650253\\
60000000    &94050 & 93715 & 560000000 &671998& 671626\\
80000000    &120806 & 120523 & 580000000 &693377& 692921\\
100000000   &146748 & 146560 & 600000000 &714783& 714139\\
120000000   &172172 & 172007 & 620000000 &735972& 735285\\
140000000   &197180 & 196979 & 640000000 &756879& 756360\\
160000000   &221586 & 221554 & 660000000 &777830& 777368\\
180000000   &245768 & 245790 & 680000000 &798736& 798311\\
200000000   &269776 & 269730 & 700000000 &819665& 819190\\
220000000   &293290 & 293410 & 720000000 &840621& 840008\\
240000000   &316771 & 316855 & 740000000 &861196& 860768\\
260000000   &340034 & 340090 & 760000000 &881992& 881470\\
280000000   &363448 & 363133 & 780000000 &902549& 902117\\
300000000   &386413 & 385999 & 800000000 &923181& 922709\\
320000000   &409103 & 408703 & 820000000 &943660& 943250\\
340000000   &431644 & 431255 & 840000000 &964135& 963740\\
360000000   &453854 & 453667 & 860000000 &984561& 984180\\
380000000   &476378 & 475947 & 880000000 &1005037& 1004572\\
400000000   &498621 & 498103 & 900000000 &1025528& 1024917\\
420000000   &520651 & 520143 & 920000000 &1045814& 1045217\\
440000000   &542604 & 542072 & 940000000 &1066059& 1065472\\
460000000   &564364 & 563898 & 960000000 &1086151& 1085683\\
480000000   &586046 & 585624 & 980000000 &1106398& 1105852\\
500000000   &607563 & 607255 & 1000000000 &1126420& 1125980\\
 \hline
\end{tabular}
\end{table}

\begin{remark}
The term $\log 5$ in (\ref{E:X011 prediction}) is numerically important.  For example, we find that $\C_{X_0(11),5} \cdot \int_{6}^{10^9} (\log(u+1)\log u)^{-1} du \approx 1033120$, which is a worse approximation of $P_{X_0(11),5}(10^9)$ than that given in Table~\ref{T:X0(11)}.
\end{remark}

\section{Recent progress}

We briefly describe some of the progress that has been made on Koblitz's conjecture.  This very short survey is not meant to be exhaustive and sometimes we only state special cases of results; one should consult the cited papers for more details and developments.   In this section, we limit ourselves to elliptic curves defined over $\QQ$.\\

First of all, there are currently no examples where Conjecture~\ref{C:main} is known to hold besides those trivial cases where $C_{E,t}=0$ (and thus have a congruence obstruction).   Moreover, there are no known examples of elliptic curves $E$ and integers $t$ for which $\lim_{x\to \infty} P_{E,t}(x)=\infty$.\\

 Much of the recent progress has been made by applying methods from sieve theory (including methods that were used to study twin primes or Sophie Germain primes).     Recall that the conjecture that there are infinitely many Sophie Germain primes is equivalent to there being infinitely many primes $p$ for which $(p-1)/2$ is prime (this is an analogue of Conjecture~\ref{C:main} with $K=\QQ$, $t=2$ and $E$ replaced by the group scheme $\GG_m$).

\subsection{Non-CM curves}
Let $E$ be an elliptic curve over $\QQ$ without complex multiplication.  Miri and Murty \cite{Miri-Murty} showed, assuming GRH, that there are $\gg x/(\log x)^2$ primes $p\leq x$ for which $|E(\FF_p)|$ has at most $16$ prime divisors.  Steuding and Weng \cite{Steuding-Weng,Steuding-Weng-E} improved this to $9$ factors.    Assuming GRH and $t_E=1$, David and Wu \cite{David-Wu} have shown that
\[
|\{p\leq x : |E(\FF_p)| \text{ has at most $8$ prime factors}\}| \geq 2.646\cdot \C_{E,1} \frac{x}{(\log x)^2}
\]
for $x\gg_E 1$, where $\C_{E,1}$ is the constant of Conjecture~\ref{C:main}.\\

We now mention some upper bounds obtained under GRH (though weakening of this conjecture can also be used).   Cojocaru \cite{Cojocaru-Koblitz} proved that $P_{E,1}(x) \ll x/(\log x)^2;$ which of course should be the best possible general bound, up to improvement of the implicit constant. David and Wu \cite{David-Wu} have shown that for any $\varepsilon>0$, one has 
\[
P_{E,1} \leq (10+\varepsilon)\C_{E,1}\frac{x}{(\log x)^2}
\]
 for all $x\gg_{E,\varepsilon} 1$.   In the general setting of Conjecture~\ref{C:main}, one has the bound
 \[
 P_{E,t}(x) \leq (22+o(1)) \C_{E,t} \frac{x}{(\log x)^2}
 \]
where the $o(1)$ term depends on $E$ and $t$; this is Theorem~1.3 of \cite{Zywina-Large} (this theorem uses $t=t_E$, but the proof carries through for general $t$).   

For \emph{unconditional} upper bounds, Cojocaru \cite{Cojocaru-Koblitz} proved that $P_{E,1}(x) \ll x/(\log x \log\log \log x)$.  This can be strengthened to $P_{E,1}(x)\leq (24+ o(1)) \C_{E,1} \cdot x/(\log x \log\log x),$ see \cite{Zywina-Large}*{Theorem~1.3}.  However, this is still not strong enough to prove that 
\[
\sum_{p,\, |E(\FF_p)| \text{ is prime}} \frac{1}{p}  < \infty
\] 
(which would be the analogue of Brun's theorem that $\sum_{p\text{ and } p+2 \text{ are prime}} 1/p < \infty$).

\subsection{CM elliptic curves}
Now consider a CM elliptic curve $E$ over $\QQ$.   If $t_E=1$, then Cojocaru \cite{Cojocaru-Koblitz}*{Theorem 4} has shown that
\[
|\{ p \leq x : |E(\FF_p)| \text{ has at most $5$ prime factors}\}| \gg \frac{x}{(\log x)^2}
\]
(note that this theorem does \emph{not} depend on GRH).  If $E$ has CM by the maximal order $\OO_F$ of an imaginary quadratic extension $F/\QQ$, then Jim\'enez Urroz \cite{Urroz} has proved that
\[
|\{ p \leq x:  p \text{ splits in $F$,\,  $|E(\FF_p)|/t_{E_F}$ has at most $2$ prime factors}\}| \gg \frac{x}{(\log x)^2}
\]
(this extends a result of Iwaniec and Jim\'enez Urroz mentioned at the beginning of \S\ref{S:CM example}).

\subsection{The conjecture on average}

We now consider the functions $P_{E,1}(x)$ averaged over a family of elliptic curves.    Fix $\alpha> 1/2$ and $\beta>1/2$ with $\alpha+\beta>3/2$.  Let $\calF(x)$ be the set of $(a,b)\in \ZZ^2$ with $|a|\leq x^\alpha$ and $|b|\leq x^\beta$ for which $4a^3+27b^2 \neq 0$.   For $(a,b)\in \calF(x)$, let $E(a,b)$ be the elliptic curve over $\QQ$ defined by the affine equation $Y^2=X^3+aX+b$.   Balog, Cojocaru, and David \cite{Balog-Cojocaru-David} have proved that
\begin{equation} \label{E:onaverage}
\frac{1}{|\calF(x)|} \sum_{(a,b) \in\calF(x)} P_{E(a,b),1}(x) \sim \mathfrak{C} \frac{x}{(\log x)^2}
\end{equation}
as $x\to \infty$, where $\displaystyle\mathfrak{C}=\prod_\ell  \Bigl(1- \frac{\ell^2-\ell-1}{(\ell-1)^3(\ell+1)} \Bigr).$    Informally, this says that Koblitz's  conjecture holds ``on average''.  

\subsection{The constant on average}
Assuming a positive answer to a question of Serre\footnote{Does there exists a  constant $C$ such that for any non-CM elliptic curve $E/\QQ$, we have $\rho_{\ell}(\Gal(\Qbar/\QQ))=\GL(\ZZ/\ell\ZZ)$ for all primes $\ell\geq C$?}, Jones \cite{Jones-Ann} proved that (\ref{E:onaverage}) is also true on the level of constants; i.e., 
\[
\lim_{x\to \infty} \frac{1}{|\calF(x)|} \sum_{(a,b) \in\calF(x)} \C_{E(a,b),1} = \mathfrak{C}.
\]
Finally, we explain how this can be proven \emph{unconditionally} (we state it in a fashion similar to \cite{Jones-Ann}*{Theorem~6}).   
\begin{prop}
Let $\calF(x)$ be the set of $(a,b)\in \ZZ^2$ with $|a|\leq x$ and $|b|\leq x$ such that $4a^3+27b^2\neq 0$.  Then there is an absolute constant $\gamma>0$ such that for any integer $k\geq 1$, we have
\[
\frac{1}{|\calF(x)|} \sum_{(a,b) \in\calF(x)} |\C_{E(a,b),1} - \mathfrak{C}|^k \ll_k \frac{(\log x)^\gamma}{\sqrt{x}}.
\]
\end{prop}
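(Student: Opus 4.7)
The strategy is to split $\calF(x)$ into Serre curves $\Ss(x)$ and the exceptional remainder, and to show that each contribution to the $k$-th moment is $O_k((\log x)^\gamma/\sqrt{x})$. For Serre curves, Proposition~\ref{P:Serre curves constant} gives a closed form for $\C_{E,1}-\mathfrak{C}$ in terms of the discriminant of $\QQ(\sqrt{\Delta_E})$, which will turn out to be negligible on average. For the remaining curves, one combines a uniform a priori bound on $|\C_{E,1}-\mathfrak{C}|$ with a quantitative version of Jones's theorem that most elliptic curves over $\QQ$ are Serre curves.

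Concretely, Proposition~\ref{P:Serre curves constant} yields the bound $|\C_{E,1}-\mathfrak{C}|\ll \prod_{\ell\mid D_E}(\ell^3-2\ell^2-\ell+3)^{-1}\ll \rad(D_E)^{-3}$ for Serre curves $E=E(a,b)$, where $\Delta_E=-16(4a^3+27b^2)$ and $D_E$ is the discriminant of $\QQ(\sqrt{\Delta_E})$. A standard squarefull-number estimate shows that the number of $(a,b)\in \calF(x)$ with $\rad(\Delta_E)\leq x^{1/4}$ is negligible compared to $x^2$, so the Serre-curve contribution to the $k$-th moment is $O_k(x^{-3k/4+\varepsilon})$, which is well within the target. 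On the non-Serre side, the crucial ingredient is a power-saving count $|\calF(x)\setminus \Ss(x)|\ll x^{3/2}(\log x)^A$ for some absolute $A$, which is built into Jones's proof~\cite{Jones-AAECASCF} via large-sieve bounds for non-maximal mod-$\ell$ Galois images. Since $\C_{E,1}$ is bounded uniformly in $E$ (say $|\C_{E,1}-\mathfrak{C}|\leq B$ for an absolute $B$), the non-Serre contribution to the $k$-th moment is at most $B^k\cdot|\calF(x)\setminus \Ss(x)|/|\calF(x)|\ll_k (\log x)^A/\sqrt{x}$, matching the target.

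The main obstacle is establishing the sharp quantitative bound $|\calF(x)\setminus \Ss(x)|\ll x^{3/2}(\log x)^A$, rather than the weaker $o(|\calF(x)|)$ statement that suffices for Jones's almost-all result. This requires extracting from the large-sieve inequality a uniform bound on the number of $(a,b)\in\calF(x)$ whose mod-$\ell$ Galois image is contained in a given proper maximal subgroup of $\GL(\ZZ/\ell\ZZ)$, and then summing over $\ell$ with a cutoff tuned to the sieve's error term. Large primes are controlled by the absolute form of Serre's open-image theorem, which forces the Serre-level obstruction to come from a bounded set of primes for curves of height at most $x$. Once the count is in hand, combining the two contributions gives the moment bound, and the exponent $\gamma$ depends only on the large-sieve exponent, independently of $k$.
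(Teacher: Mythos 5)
Your decomposition into Serre curves and an exceptional set, with Jones's quantitative results \cite{Jones-AAECASCF}, \cite{Jones-Ann} as the external inputs, is exactly the paper's strategy. However, there is one genuine gap, and it sits precisely where the proposition earns its "unconditional" status: you assert that $|\C_{E,1}-\mathfrak{C}|\leq B$ for an absolute constant $B$ and use this to dispose of the non-Serre curves. No such uniform bound is known. Proposition~\ref{P:how to} only gives $\C_{E,1}\leq \prod_{\ell\mid M}(1-1/\ell)^{-1}\ll \log\log M$, where $M$ collects the primes $\ell$ with $G(\ell)\not\supseteq\SL(\ZZ/\ell\ZZ)$; bounding $M$ absolutely is Serre's open uniformity question, which is exactly the hypothesis in Jones's conditional version (\cite{Jones-Ann}*{Theorem 6}) that this proposition is meant to remove. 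Your phrase "the absolute form of Serre's open-image theorem" is an appeal to that open question. The paper's unconditional substitute is Masser's multiplicative isogeny estimate \cite{Masser} combined with \cite{Silverman-heights}, giving $M\ll h(j_E)^{\kappa}$ and hence $\C_{E(a,b),1}\ll \log\log\log(h(j_{E(a,b)})+16)\ll \log\log\log\log x$ for $(a,b)\in\calF(x)$; the $k$-th power of this slowly growing quantity is then absorbed into $(\log x)^{\gamma}$. Without this (or some other unconditional control of $\C_{E,1}$ on the exceptional set), your bound on the non-Serre contribution does not go through.

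A secondary point: your claimed bound $O_k(x^{-3k/4+\varepsilon})$ for the Serre-curve contribution is false for $k\geq 3$. The curve $y^2=x^3+6x-2$ of \S\ref{S:main example} is a Serre curve with $D=-3$ and $|\C_{E,1}-\mathfrak{C}|=\mathfrak{C}/9$, so the single pair $(a,b)=(6,-2)$ already contributes $\gg_k x^{-2}$ to the averaged $k$-th moment, which exceeds $x^{-3k/4+\varepsilon}$ once $k\geq 3$. The honest task on the Serre side is to show that the pairs $(a,b)$ for which the squarefree part of $4a^3+27b^2$ is small number $\ll x^{3/2}(\log x)^{O(1)}$; this amounts to counting integral points on the surfaces $4a^3+27b^2=dm^2$ and is not a "standard squarefull-number estimate." That count is the content of \cite{Jones-Ann}*{Theorem 10}, which the paper cites and which yields $(\log x)^{8}/\sqrt{x}$ — the same shape as the target, not a large power saving. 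Neither of these corrections changes the final exponent, but both are needed for the argument to be complete.
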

\begin{proof}(Sketch)
We first consider a fixed non-CM elliptic curve $E$ over $\QQ$.   Let $M$ be the positive squarefree integer for which $\ell\nmid M$ if and only if $\ell \geq 5$ and $G(\ell) \supseteq \SL(\ZZ/\ell\ZZ)$.     
Some group theory shows that
\[
G(Mm) = G(M) \times \prod_{\ell|m} \Aut(E[\ell])
\]
for any squarefree integer $m$ relatively prime to $M$.   
By \cite{Masser}*{Theorem~3}, there is an absolute constant $\kappa\geq 0$ such that
\[
M\ll \max\{ 1, h(E)^\kappa \}
\]
where $h(E)$ is the logarithmic absolute semistable Faltings height of $E$.      By \cite{Silverman-heights},  we have $h(E)\ll h(j_E)$ where $j_E$ is the $j$-invariant of $E$ and $h$ is the usual height of a rational number.

By Proposition~\ref{P:how to} with the above $M$, 
\[
 \C_{E,1} = \dfrac{ \delta_{E,1}\big(\prod_{\ell|M}\ell\big)}{\prod_{\ell|M} (1-1/\ell)} \prod_{\ell \nmid M} \Big(1 - \frac{\ell^2-\ell-1}{(\ell-1)^3(\ell+1)}\Big) \leq \prod_{\ell|M} (1-1/\ell)^{-1}.
\]
If $y$ is the smallest prime for which $\prod_{\ell\leq y} \ell \leq M$, then 
\[
\prod_{\ell|M} (1-1/\ell)^{-1}\leq \prod_{\ell\leq y} (1-1/\ell)^{-1} \ll \log y
\]
where the last inequality follows from Mertens' theorem.  So
\[
\C_{E,1} \ll  \log y  \ll \log\big(\sum_{\ell\leq y} \log \ell \big) \leq \log \log M.
\]
Therefore, for any non-CM elliptic curve over $\QQ$ we have
\begin{equation} \label{E:constant bound}
\C_{E,1} \ll \log\log\log(h(j_E)+16)
\end{equation}
(the $16$ is added simply to make sure the right-hand side is alway well-defined and positive).   One can also check that $\C_{E,1} \ll 1$ for CM elliptic curves $E/\QQ$.\\

Let $\mathcal{S}(x)$ be the set of $(a,b)\in \calF(x)$ for which $E(a,b)$ is a Serre curve (cf.~\S\ref{S:Serre curve section}).   Theorem~10 of \cite{Jones-Ann} implies that 
\begin{equation} \label{E:almost done 1}
\frac{1}{|\calF(x)|} \sum_{(a,b) \in\mathcal{S}(x)} |\C_{E(a,b),1} - \mathfrak{C}|^k \ll_k  \frac{(\log x)^8}{\sqrt{x}};
\end{equation}
a key point is that the difference $\C_{E(a,b),1} - \mathfrak{C}$ has a nice description (cf.~Proposition~\ref{P:Serre curves constant}).

For each $(a,b)\in \calF(x)$, we have $h(j_{E(a,b)})\ll \log x$.   Therefore by (\ref{E:constant bound}) we have
\begin{align*}
\frac{1}{|\calF(x)|} \sum_{(a,b) \in\calF(x)-\mathcal{S}(x)} |\C_{E(a,b),1} - \mathfrak{C}|^k & \ll_k  \frac{|\calF(x)-\mathcal{S}(x)|}{|\calF(x)|} (\log\log\log\log x)^{k}.
\end{align*}
By \cite{Jones-AAECASCF}*{Theorem~4}, there is a constant $\beta>0$ such that  $|\calF(x)-\mathcal{S}(x)|/|\calF(x)| \ll (\log x)^\beta/\sqrt{x}$.  Therefore
\begin{align}\label{E:almost done 2}
\frac{1}{|\calF(x)|} \sum_{(a,b) \in\calF(x)-\mathcal{S}(x)} |\C_{E(a,b),1} - \mathfrak{C}|^k & \ll_k  \frac{(\log x)^\beta}{\sqrt{x}}
\end{align}
for some constant $\beta>0$.  The proposition follows immediately by combining (\ref{E:almost done 1}) and (\ref{E:almost done 2}).
\end{proof}

\begin{bibdiv}
\begin{biblist}

\bib{Magma}{article}{
author={Bosma, W.}, author={Cannon, J.}, author={Playoust, C.}, 
title={The {M}agma algebra system. {I}. {T}he user language}, 
journal={J. Symbolic Comput.}, 
volume={24},
year={1997},
pages={235--265}
}

\bib{Cojocaru-Koblitz}{article}{
   author={Cojocaru, Alina Carmen},
   title={Reductions of an elliptic curve with almost prime orders},
   journal={Acta Arith.},
   volume={119},
   date={2005},
   number={3},
   pages={265--289},
}

\bib{Balog-Cojocaru-David}{article}{
author={Balog, Antal}, author={Cojocaru, Alina}, author={David, Chantal},
title={Average twin prime conjecture for elliptic curves},
note={\href{http://arxiv.org/abs/0709.1461}{arXiv:0709.1461v1} [math.NT]},
date={2007}
}

\bib{David-Wu}{article}{
author={David, Chantal},
author={Wu, Jie},
title={Almost prime values of the order of elliptic curves over finite fields},
note={\href{http://arxiv.org/abs/0812.2860}{arXiv:0812.2860v1} [math.NT]},
date={2008}
}

\bib{Aaron}{thesis}{
  author={Greicius, Aaron},
  title={Elliptic curves with surjective global Galois representation},
  school = {University of California, Berkeley},
  type = {Ph.D. thesis},
  year = {2007},
  }
   
   \bib{Hardy-Littlewood}{article}{
   author={Hardy, G.H.},
   author={Littlewood, J.E.},
   title={Some problems of `Partitio numerorum': III: on the expression of a number as a sum of primes},
   journal={Acta Math.},
   volume={44},
   date={1923},
   pages={1\ndash 70}
   }

\bib{Iwaniec}{article}{
author={Iwaniec, H.}, 
author={Jim\'enez Urroz, J.},
title={Orders of CM elliptic curves modulo p with at most two primes}, note={\href{http://upcommons.upc.edu/e-prints/handle/2117/1169}{http://upcommons.upc.edu/e-prints/handle/2117/1169}}
date={2006}
}

\bib{Urroz}{article}{
   author={Jim{\'e}nez Urroz, Jorge},
   title={Almost prime orders of CM elliptic curves modulo $p$},
   conference={
      title={Algorithmic number theory},
   },
   book={
      series={Lecture Notes in Comput. Sci.},
      volume={5011},
      publisher={Springer},
      place={Berlin},
   },
   date={2008},
   pages={74--87},
}

\bib{Jones-AAECASCF}{article}{
   author={Jones, Nathan},
   title={Almost all elliptic curves are Serre curves}, 
   journal={Transactions of the AMS (to appear)},
   date={2009}
}

\bib{Jones-Ann}{article}{
   author={Jones, Nathan},
   title={Averages of elliptic curve constants}, 
   journal = {Math.~Ann. (to appear)},
   date={2009}
}

   \bib{KatzInv}{article}{
   author={Katz, Nicholas M.},
   title={Galois properties of torsion points on abelian varieties},
   journal={Invent. Math.},
   volume={62},
   date={1981},
   number={3},
   pages={481--502},
}

\bib{KatzSarnak}{book}{
   author={Katz, Nicholas M.},
   author={Sarnak, Peter},
   title={Random matrices, Frobenius eigenvalues, and monodromy},
   series={American Mathematical Society Colloquium Publications},
   volume={45},
   publisher={American Mathematical Society},
   place={Providence, RI},
   date={1999},
   pages={xii+419},
}

\bib{Koblitz}{article}{
   author={Koblitz, Neal},
   title={Primality of the number of points on an elliptic curve over a
   finite field},
   journal={Pacific J. Math.},
   volume={131},
   date={1988},
   number={1},
   pages={157--165},
}
  
\bib{LangAlgebra}{book}{
   author={Lang, Serge},
   title={Algebra},
   series={Graduate Texts in Mathematics},
   volume={211},
   edition={3},
   publisher={Springer-Verlag},
   place={New York},
   date={2002},
   pages={xvi+914},
}
\bib{Lang-Trotter}{book}{
   author={Lang, Serge},
   author={Trotter, Hale},
   title={Frobenius distributions in ${\rm GL}\sb{2}$-extensions},
   note={Distribution of Frobenius automorphisms in ${\rm
   GL}\sb{2}$-extensions of the rational numbers;
   Lecture Notes in Mathematics, Vol. 504},
   publisher={Springer-Verlag},
   place={Berlin},
   date={1976},
   pages={iii+274},
}

\bib{Masser}{article}{
   author={Masser, David},
   title={Multiplicative isogeny estimates},
   journal={J. Austral. Math. Soc. Ser. A},
   volume={64},
   date={1998},
   number={2},
   pages={178--194},
   issn={0263-6115},
}

\bib{Miri-Murty}{article}{
   author={Miri, S. Ali},
   author={Murty, V. Kumar},
   title={An application of sieve methods to elliptic curves},
   conference={
      title={Progress in cryptology---INDOCRYPT 2001 (Chennai)},
   },
   book={
      series={Lecture Notes in Comput. Sci.},
      volume={2247},
      publisher={Springer},
      place={Berlin},
   },
   date={2001},
   pages={91--98},
}

\bib{PARI}{manual}{
      author={PARI~Group, The},
      organization = {The PARI~Group},
      title       = {PARI/GP, version {\tt 2.3.4}},  
      year         = {2008},
      address      = {Bordeaux},
      note         = {available from \url{http://pari.math.u-bordeaux.fr/}}
    }

\bib{SerreInv}{article}{
   author={Serre, Jean-Pierre},
   title={Propri\'et\'es galoisiennes des points d'ordre fini des courbes
   elliptiques},
   journal={Invent. Math.},
   volume={15},
   date={1972},
   number={4},
   pages={259--331},
}

\bib{SerreCheb}{article}{
   author={Serre, Jean-Pierre},
   title={Quelques applications du th\'eor\`eme de densit\'e de Chebotarev},
   journal={Inst. Hautes \'Etudes Sci. Publ. Math.},
   number={54},
   date={1981},
   pages={323--401},
}

\bib{SerreTate:GoodReduction}{article}{
   author={Serre, Jean-Pierre},
   author={Tate, John},
   title={Good reduction of abelian varieties},
   journal={Ann. of Math. (2)},
   volume={88},
   date={1968},
   pages={492--517},
}

\bib{Silverman-heights}{article}{
   author={Silverman, Joseph H.},
   title={Heights and elliptic curves},
   conference={
      title={Arithmetic geometry},
      address={Storrs, Conn.},
      date={1984},
   },
   book={
      publisher={Springer},
      place={New York},
   },
   date={1986},
   pages={253--265},
}

\bib{Steuding-Weng}{article}{
   author={Steuding, J{\"o}rn},
   author={Weng, Annegret},
   title={On the number of prime divisors of the order of elliptic curves
   modulo $p$},
   journal={Acta Arith.},
   volume={117},
   date={2005},
   number={4},
   pages={341--352},
}

\bib{Steuding-Weng-E}{article}{
   author={Steuding, J{\"o}rn},
   author={Weng, Annegret},
   title={Erratum: ``On the number of prime divisors of the order of
   elliptic curves modulo $p$'' [Acta Arith. {\bf 117} (2005), no. 4,
   341--352; MR 2140162]},
   journal={Acta Arith.},
   volume={119},
   date={2005},
   number={4},
   pages={407--408},
}

\bib{Stevenhagen}{article}{
   author={Stevenhagen, Peter},
   title={The correction factor in Artin's primitive root conjecture},
   note={Les XXII\`emes Journ\'ees Arithmetiques (Lille, 2001)},
   journal={J. Th\'eor. Nombres Bordeaux},
   volume={15},
   date={2003},
   number={1},
   pages={383--391},
}

\bib{Zywina-maximalgalois}{article}{
   author={Zywina, David},
   title={Elliptic curves with maximal Galois action on their torsion points}, 
   note={\href{http://arxiv.org/abs/0809.3482}{arXiv:0809.3482v1} [math.NT]},
   date={2008}
}

\bib{Zywina-Large}{article}{
   author={Zywina, David},
   title={The Large Sieve and Galois Representations}, 
   note={\href{http://arxiv.org/abs/0812.2222}{arXiv:0812.2222v1} [math.NT]},
   date={2008}
}

\end{biblist}
\end{bibdiv}

\end{document}